\newtheorem{theorem}{Theorem}[section]
\newtheorem{lemma}[theorem]{Lemma}
\newtheorem{prop}[theorem]{Proposition}
\newcommand{\Hmm}[1]{\leavevmode{\marginpar{\tiny%
$\hbox to 0mm{\hspace*{-0.5mm}$\leftarrow$\hss}%
\vcenter{\vrule depth 0.1mm height 0.1mm width \the\marginparwidth}%
\hbox to 0mm{\hss$\rightarrow$\hspace*{-0.5mm}}$\\\relax\raggedright #1}}}
\newcommand{\nc}{\newcommand}
\nc{\les}{\lesssim}
\nc{\ges}{\gtrsim}
\nc{\nit}{\noindent}
\nc{\nn}{\nonumber}
\nc{\D}{\partial}
\nc{\diff}[2]{\frac{d #1}{d #2}}
\nc{\diffn}[3]{\frac{d^{#3} #1}{d {#2}^{#3}}}
\nc{\pdiff}[2]{\frac{\partial #1}{\partial #2}}
\nc{\pdiffn}[3]{\frac{\partial^{#3} #1}{\partial{#2}^{#3}}}
\nc{\abs}[1] {\lvert #1 \rvert}
\nc{\cAc}{{\cal A}_c}
\nc{\cE}{{\cal E}}
\nc{\cF}{{\mathcal F}}
\nc{\cP}{{\cal P}}
\nc{\cV}{{\cal V}}
\nc{\cQ}{{\cal Q}}
\nc{\cGin}{{\cal G}_{\rm in}}
\nc{\cGout}{{\cal G}_{\rm out}}
\nc{\cO}{{\cal O}}
\nc{\Lav}{{\cal L}_{\rm av}}
\nc{\cL}{{\cal L}}
\nc{\cB}{{\cal B}}
\nc{\cZ}{{\cal Z}}
\nc{\cR}{{\cal R}}
\nc{\cT}{{\cal T}}
\nc{\cY}{{\cal Y}}
\nc{\cX}{{\cal X}}
\nc{\cXT}{{{\cal X}(T)}}
\nc{\cBT}{{{\cal B}(T)}}
\nc{\vD}{{\vec \mathcal{D}}}
\nc{\efield}{\mathcal{E}}
\nc{\vE}{{\vec \efield}}
\nc{\vB}{{\vec \mathcal{B}}}
\nc{\vH}{{\vec \mathcal{H}}}
\nc{\ty}{{\tilde y}}
\nc{\tu}{{\tilde u}}
\nc{\tV}{{\tilde V}}
\nc{\Pc}{{\bf P_c}}
\nc{\bx}{{\bf x}}
\nc{\bX}{{\bf X}}
\nc{\bXYZ}{{\bf XYZ}}
\nc{\bY}{{\bf Y}}
\nc{\bF}{{\bf F}}
\nc{\bS}{{\bf S}}
\nc{\dV}{{\delta V}}
\nc{\dE}{{\delta E}}
\nc{\TT}{{\Theta}}
\nc{\dPsi}{{\delta\Psi}}
\nc{\order}{{\cal O}}
\nc{\Rout}{R_{\rm out}}
\nc{\eplus}{e_+}
\nc{\eminus}{e_-}
\nc{\epm}{e_\pm}
\nc{\eps}{\varepsilon}
\nc{\vnabla}{{\vec\nabla}}
\nc{\G}{\Gamma}
\nc{\w}{\omega}
\nc{\mh}{h}
\nc{\mg}{g}
\nc{\vphi}{\varphi}
\nc{\tlambda}{\tilde\lambda}
\nc{\be}{\begin{equation}}
\nc{\ee}{\end{equation}}
\nc{\ba}{\begin{eqnarray}}
\nc{\ea}{\end{eqnarray}}
\nc{\g}{\gamma}
\nc{\ol}{\overline}
\def\R{\mathbb R}
\nc{\T}{\mathbb T}
\nc{\Z}{\mathbb Z}
\nc{\N}{\mathbb N}
\nc{\pt}{\partial_t}
\nc{\la}{\langle}
\nc{\ra}{\rangle}
\nc{\infint}{\int_{-\infty}^{\infty}}
\nc{\halfwidth}{6.5cm}
\nc{\uu}{\" u}
\nc{\oo}{\" o}
\nc{\nlayers}{L} \nc{\nsectors}{M}
\nc{\indicator}{\mathbf{1}}
\nc{\Rhole}{R_{\rm hole}}
\nc{\Rring}{R_{\rm ring}}
\nc{\neff}{n_{\rm eff}}
\nc{\Frem}{F_{\rm rem}}
\nc{\DD}{\Delta}
\nc{\cD}{\mathcal D}
\nc{\lnorm}{\left\|}
\nc{\rnorm}{\right\|}
\nc{\rnormp}{\right\|_{\ell^{p,\eps}}}
\nc{\rar}{\rightarrow}
\nc{\sgn}{{\rm sign}}
\nc{\non}{\nonumber}
\nc{\wh}{\widehat}
\date{\today}
\begin{document}

\title[Fractional Schr\"odinger equation]{Smoothing  for the fractional Schr{\" o}dinger equation on the torus and the real line}

\author[ Erdo\u{g}an, G{\uu}rel, Tzirakis]{M. B. Erdo\u{g}an, T. B. G{\uu}rel, and N. Tzirakis}
\thanks{The first author is partially supported by NSF grant  DMS-1501041. The second author is supported by a grant from the Fulbright Foundation, and thanks the University of Illinois for its hospitality. The third  author's work was supported by a grant from the Simons Foundation (\#355523 Nikolaos Tzirakis)}
 
\address{Department of Mathematics \\
University of Illinois \\
Urbana, IL 61801, U.S.A.}
\email{berdogan@illinois.edu}

\address{Department of Mathematics \\
University of Illinois \\
Urbana, IL 61801, U.S.A.}
\curraddr{Department of Mathematics, Bo\u gazi\c ci University, Bebek 34342, Istanbul, Turkey}
\email{bgurel@boun.edu.tr}

\address{Department of Mathematics \\
University of Illinois \\
Urbana, IL 61801, U.S.A.}
\email{tzirakis@illinois.edu}

\begin{abstract}

In this paper we study the cubic fractional nonlinear Schr\"odinger equation (NLS) on the torus and on the real line. Combining the normal form and the restricted norm methods we prove that the nonlinear part of the solution is smoother than the initial data. Our method applies to both focusing and defocusing nonlinearities. In the case of full dispersion (NLS) and on the torus, the gain is a full derivative, while on the real line we get a derivative smoothing with an $\epsilon$ loss. Our result lowers the regularity requirement of a recent theorem of Kappeler et al., \cite{kst} on the periodic defocusing cubic NLS, and extends it to the focusing case and to the real line. We also obtain estimates on the higher order Sobolev norms of the global smooth solutions in the defocusing case.

\end{abstract}

\maketitle

\section{Introduction}

In this paper we study the  fractional cubic Schr\"odinger equation:
\begin{equation}\label{sch}
\left\{
\begin{array}{l}
iu_{t}+(-\Delta)^{\alpha} u =\pm |u|^2u, \,\,\,\,  x \in {\mathbb{K}}, \,\,\,\,  t\in \mathbb{R} ,\\
u(x,0)=u_0(x)\in H^{s}(\mathbb{K}), \\
\end{array}
\right.
\end{equation}
where $\alpha \in (1/2,1]$, and $\mathbb K=\T$ or $\R$. Note that the case $\alpha=1$ is known as the cubic NLS equation, and it is a very well studied model.   
The equation \eqref{sch} is called defocusing when the sign in front of the nonlinearity is a minus and focusing when the sign is a plus.
Fractional Schr\"odinger equations posed on the real line and on the torus have appeared in many recent articles, see \cite{kay}, \cite{det}, \cite{kwon} and the references therein. 
For example it has been observed that in the context of quantum mechanics,  the path integral formalism over Brownian trajectories
leads to NLS type equations. Whereas,  the path integral over
L\'evy trajectories leads to the fractional Schr\"odinger equation, see Laskin, \cite{laskin}. A rigorous derivation of the equation can be found in \cite{kay} starting from a family of models describing charge transport in bio polymers like the DNA. The starting point is a discrete
nonlinear Schr\"odinger equation with general lattice interactions. Equation \eqref{sch} with $\alpha\in(\frac12,1)$ appears as the continuum limit of the long--range interactions between quantum particles on the lattice. 

Bourgain in \cite{bou} obtained  the local and global wellposedness of the periodic NLS equation which corresponds to the case $\alpha=1$. More precisely he proved the existence and uniqueness of local-in-time strong $L^2(\T)$ solutions. Since it is known that smooth solutions of  NLS satisfy mass conservation 
$$M(u)(t)=\int_{\mathbb{T}}|u(t,x)|^2 dx=M(u)(0),$$
Bourgain's result leads to the existence of global-in-time strong $L^2(\T)$ solutions in the focusing and defocusing case. The $L^2(\T)$ theorem of Bourgain is sharp since as it was shown in \cite{bgt}, the solution operator is not uniformly continuous on $H^s(\Bbb T)$ for $s<0$. For the existence and uniqueness of global $L^2(\R)$ solutions the reader can consult \cite{tsu}. 

For $\alpha \in (\frac12,1)$, the  local and global wellposedness theory  on the torus was developed in \cite{det} utilizing the restricted norm method (also known as the $X^{s,b}$ method) of Bourgain, \cite{bou}. For the cubic equation in one dimension the critical regularity index is $s_c=\frac{1-\alpha}{2}$, and the theory developed in \cite{det} proves local wellposedness for any $s>s_c$. The endpoint was obtained in \cite{kwon}. In \cite{kwon},  the authors also established the local wellposedness theory on the real line for any $s \geq s_c$. Moreover they proved that the equation  is illposed for any $s<s_c$ both on the torus and the real line. A recent paper, \cite{hs}, discusses the fractional Schr\"odinger equation in higher dimensions and with more general power type nonlinearities. In what follows we rely on  the $X^{s,b}$ wellposedness theory that was established in \cite{det} and \cite{kwon}.

In this paper we improve the work in \cite{det} where the smoothing properties of the fractional cubic equation were studied. The case $\alpha=1$  was considered earlier in  \cite{talbot}. Both papers utilized the restricted norm method of Bourgain to prove  that the nonlinear part of the solution is smoother than the linear part. The gain in regularity was up to   $\min(\alpha-\frac12, 2s+\alpha-1).$ 
The result on the torus was improved to a full derivative gain by Kappaler, Schaad and Topalov,  in \cite{kst},  for the defocusing cubic NLS whenever the initial data is smoother than $H^2(\T)$. Their paper  contains many other interesting and nontrivial results, and is based on complete integrability methods, that are applicable only in the defocusing case.  The results  in \cite{talbot} and \cite{det} on the other hand are true for focusing or defocusing equations and are independent of the integrability structure of the system.

In addition to the conservation of mass, smooth solutions of \eqref{sch} satisfy energy conservation  
\be\label{conserve}
E(u)(t)=\int_{\mathbb{K}}\big||\nabla|^{\alpha} u(t,x)\big|^2 dx \pm \frac{1}{2}\int_{\mathbb{K}}\big|u(t,x)\big|^4 dx =E(u)(0).
\ee
Note that local theory at  $H^\alpha$ level along with the conservation of mass and energy imply the existence of global-in-time energy solutions. Since the equation is mass and energy sub-critical, \cite{tao}, one can obtain  global solutions also in the focusing case. This easily follows from the Gagliardo-Nirenberg inequality
$$\|u\|_{L^4}^4\lesssim \||\nabla|^{\alpha}u\|_{L^2}^{\frac{1}{\alpha}}\|u\|_{L^2}^{4-\frac{1}{\alpha}}$$
which controls the potential energy via the kinetic energy $\||\nabla|^{\alpha}u\|_{L^2}$. One can then control the Sobolev norm of the solution for all times even in the focusing case  since $\frac{1}{\alpha}<2$. In \cite{det}, the authors used the smoothing estimates in the $X^{s,b}$ norms to prove a global wellposedness  result for initial data with infinite energy.

In this paper we combine the $X^{s,b}$ theory with the theory of normal forms as was developed in \cite{bit} and \cite{et} for the periodic KdV equation. We improve the nonlinear smoothing estimates in \cite{talbot} and  \cite{det} significantly, and obtain a gain of regularity of order
$$\min(2\alpha-1, 2s+\alpha-1),$$ 
see Theorem~\ref{thm:1} below. We should mention  that for any $s>1/2$ and for the cubic NLS equation, the  gain  in \cite{talbot} was half a derivative while now we obtain a full derivative gain on the torus and almost a full derivative on the real line. This decreases the regularity requirement ($s\geq 2$) of a smoothing theorem in \cite{kst}, and extends it to the focusing case and to the real line for $\alpha\in (\frac12,1]$.

Below we summarize our results. On the torus our smoothing estimate  reads as follows.
   
\begin{theorem}\label{thm:1} Consider the equation \eqref{sch} on $\T$.  Fix $\frac12<\alpha\leq 1.$ For any $s> \frac34-\frac\alpha2 $, and $a\leq \min(2\alpha-1,2s+\alpha-1) $ (the inequality has to be strict if the minimum is  $2s+\alpha-1$), we have  
$$
\|u(t)-e^{it(-\Delta)^\alpha-iPt}u_0\|_{C^0_tH^{s+a}_x}\les \|u_0\|_{H^s}^3 +\|u_0\|_{H^s}^5
$$
for $t<T$, where $T$ is the local existence time, and $P=\frac{1}{\pi}\|u_0\|_2^2$.
\end{theorem}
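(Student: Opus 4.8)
The plan is to isolate the resonant part of the cubic nonlinearity and remove the nonresonant part by a normal form (differentiation-by-parts) transformation, then estimate the remaining terms in $X^{s,b}$-type spaces where the smoothing becomes visible. Writing $u(t)=e^{it(-\Delta)^\alpha}v(t)$ and passing to Fourier coefficients, the equation for the interaction picture takes the form $\partial_t \widehat{v}_k = \pm i \sum_{k_1-k_2+k_3=k} e^{i t \phi} \widehat{v}_{k_1}\overline{\widehat{v}_{k_2}}\widehat{v}_{k_3}$, where the phase is $\phi=|k_1|^{2\alpha}-|k_2|^{2\alpha}+|k_3|^{2\alpha}-|k|^{2\alpha}$. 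The resonant frequencies are those with $\phi=0$; for the full-dispersion case $\alpha=1$ the relation $k_1-k_2+k_3=k$ forces $\phi=-2(k_1-k)(k_3-k)$, so the resonant set is exactly $\{k_1=k\}\cup\{k_3=k\}$, and the sum over this set produces the linear correction $Pt=\frac1\pi\|u_0\|_2^2\,t$ (up to the self-interaction term $k_1=k_3=k$, which must be tracked separately). Subtracting $e^{-iPt}$ absorbs precisely this resonant contribution, so what remains to estimate is the nonresonant sum together with the genuinely smoother resonant remainder.

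First I would set up the normal form: on the nonresonant region $\phi\neq 0$ write $e^{it\phi}=\frac{1}{i\phi}\partial_t e^{it\phi}$ and integrate by parts in time. This produces a boundary term (a trilinear expression in $v$ with a factor $1/\phi$) and a new cubic-times-derivative term, into which one substitutes the equation again, generating a quintic term — this is the source of the $\|u_0\|_{H^s}^5$ contribution in the bound. The key arithmetic input is a lower bound on $|\phi|$ and a comparison of $\phi$ with the largest frequency: for $\alpha\in(1/2,1]$ one expects, on the nonresonant set, a bound of the shape $|\phi|\gtrsim |k_{\max}|^{2\alpha-1}$ times the separation between the largest two frequencies, which is exactly what converts the factor $1/\phi$ into a gain of $2\alpha-1$ derivatives. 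The secondary bound $2s+\alpha-1$ comes from the regime where one cannot extract the full $2\alpha-1$ because the two smallest frequencies are comparable and large, and one must instead borrow regularity from the $H^s$ norms of the low-frequency factors; the strict-inequality caveat when the minimum is $2s+\alpha-1$ reflects a logarithmic divergence at the endpoint that is closed only with an $\eps$ of room.

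Next I would run the multilinear estimates. For the boundary (normal-form) term I would place it in $C^0_t H^{s+a}_x$ directly, using the gain from $1/\phi$ together with an $\ell^2$-based Cauchy–Schwarz/convolution argument in the frequency variables, controlled by $\|u_0\|_{H^s}^3$; the condition $s>\frac34-\frac\alpha2$ should be exactly the threshold at which the three-fold frequency sum converges after the $2\alpha-1$ (or $2s+\alpha-1$) gain is extracted. For the quintic term produced by the time-derivative substitution I would estimate in $X^{s+a,b}$ with $b>1/2$ and then use the standard embedding $X^{s+a,b}\hookrightarrow C^0_t H^{s+a}_x$; since the $X^{s,b}$ local theory of \cite{det},\cite{kwon} already controls $\|u\|_{X^{s,b}}$ by $\|u_0\|_{H^s}$ on $[0,T]$, this closes the quintic piece and yields the $\|u_0\|_{H^s}^5$ term. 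The resonant remainder (the self-interaction piece beyond the $Pt$ correction) is genuinely smoother and is absorbed into the cubic bound.

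The main obstacle is the phase lower bound and its interaction with the loss of dispersion as $\alpha\to\frac12$. Unlike the classical $\alpha=1$ case, where $\phi$ factors cleanly and the resonant set is an explicit union of hyperplanes, for fractional $\alpha$ the function $|k|^{2\alpha}$ is only Hölder-type at the origin and the mean value / convexity estimates for $\phi$ must be carried out carefully to produce a gain that is uniform in the frequencies and degrades exactly like $2\alpha-1$. Getting the correct derivative count out of $1/\phi$ — rather than an $\eps$-lossy or a frequency-localization-dependent bound — is the crux, and it is precisely here that the torus (where frequencies are separated integers) behaves better than the real line (where nearby frequencies force an $\eps$ loss, matching the theorem's $\eps$-loss statement in the $\R$ case discussed in the abstract). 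I would expect the bulk of the technical work to be a case analysis on the relative sizes of $|k_1|,|k_2|,|k_3|,|k|$, establishing in each regime either the $2\alpha-1$ gain or the $2s+\alpha-1$ gain while keeping the frequency sums summable under the stated hypothesis on $s$.
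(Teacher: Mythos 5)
Your overall architecture --- differentiation by parts on the non-resonant cubic interactions after removing the $Pt$ phase, a trilinear boundary term estimated in $H^{s+a}$ by Cauchy--Schwarz in the frequency sums, and the resulting quintic terms closed in $X^{s,b}$ via the local theory and the embedding into $C^0_tH^{s+a}_x$ --- is exactly the paper's (Section 3, Propositions~\ref{prop:main} and \ref{prop:main2}). However, one step in your plan genuinely fails for $\alpha<1$: you propose to integrate by parts on the whole set $\{\phi\neq 0\}$. For fractional $\alpha$ the phase is not bounded below off the resonant set: writing $k_1=k+m$, $k_3=k+n$, Lemma~\ref{freq_est} gives $|\phi|\gtrsim |m||n|\,(|m|+|n|+|k|)^{2\alpha-2}$ (note this is the product of \emph{both} separations times a negative power of the top frequency, not the form you guessed), so for $|m|=|n|=1$ one has $|\phi|\sim |k|^{2\alpha-2}\to 0$. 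Integrating by parts there costs a factor $1/\phi\sim |k|^{2-2\alpha}$, and the resulting boundary multiplier over the region $0<|mn|\ll\langle k\rangle^{2-2\alpha}$ is of size $\langle k\rangle^{2a-4s+4-4\alpha}$, which is bounded only for $a\leq 2s+2\alpha-2$ --- strictly short of the claimed $a<2s+\alpha-1$ whenever $\alpha<1$ (and the quintic terms inherit the same bad factor). The paper's fix is to apply the normal form only where $|k_1-k|\,|k_2-k_1|\gtrsim\langle k\rangle^{2-2\alpha}$, so that $|\phi|\gtrsim 1$, and to estimate the complementary nearly-resonant trilinear sum \emph{directly}, with no normal form, using only that it contains $O(\langle k\rangle^{2-2\alpha+})$ lattice points; this is in fact where the constraint $a<2s+\alpha-1$ originates. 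Your sketch needs this extra decomposition (or a substitute) to reach the stated range of $a$.

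Two smaller points of attribution. First, the threshold $s>\frac34-\frac\alpha2$ does not come from the trilinear boundary term (which closes for essentially $s>\frac14$); it is forced by the quintic $X^{s,b}$ estimate, specifically the level-set counting for $A\pm k_1^{2\alpha}\pm k_2^{2\alpha}\pm k_3^{2\alpha}$ in Lemma~\ref{lem:23alpha}, carried out with the mean value theorem on the non-polynomial dispersion relation; this is the genuinely hard part of the fractional case and your plan does not anticipate it. Second, the quintic terms need $X^{s,b}$ only in the range $\frac34-\frac\alpha2<s\leq\frac12$; for $s>\frac12$ the paper closes everything with pure Sobolev-space estimates (Proposition~\ref{prop:mainvar}), which is what makes the energy-level refinement of Theorem~\ref{thm:2} possible.
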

In the case $s>\frac12$, the proof of Theorem~\ref{thm:1} uses a normal form transform and a priori bounds in $H^s$ spaces (see Proposition~\ref{prop:main} and Proposition~\ref{prop:mainvar}),  completely  avoiding the  $X^{s,b}$ spaces. Therefore, it    can be upgraded to the energy estimate below. Estimates of this form are useful especially when $s \geq \alpha$. In such cases energy conservation is used to bound the $H^{\alpha}$ norm and thus obtain a global-in-time energy smoothing estimate that can be used to obtain polynomial-in-time bounds for higher order Sobolev norms (see Theorem \ref{thm:n4} below). 

\begin{theorem} \label{thm:2} Consider the equation \eqref{sch} on $\T$.  Fix $\frac12<\alpha\leq 1.$ For any $s> \frac12 $, and $a\leq 2\alpha-1 $, we have   $u(t)-e^{it(-\Delta)^\alpha-iPt}u_0\in C^0_tH^{s+a}_x$ and 
\begin{multline*}
\|u(t)-e^{it(-\Delta)^\alpha-iPt}u_0\|_{ H^{s+a} } \les  
\|u_0\|_{H^s}(1+\|u_0\|_{H^{\frac12+}}^2)  +\|u(t)\|_{H^s} \|u(t)\|_{H^{\frac12+}}^2\\ +\int_0^t \|u(t^\prime)\|_{H^s}\big(\|u(t^\prime)\|_{H^{\frac12+}}^2+\|u(t^\prime)\|_{H^{\frac12+}}^4\big) dt^\prime
\end{multline*}
for all $t$ in the maximal interval of existence.

 In particular, for $s\geq \alpha$, and $a\leq 2\alpha-1 $, we have  
$$
\|u(t)-e^{it(-\Delta)^\alpha-iPt}u_0\|_{H^{s+a}} \les  
\|u_0\|_{H^s}   +\|u(t)\|_{H^s}  \\ +\int_0^t \|u(t^\prime)\|_{H^s} dt^\prime,
$$
for all $t$. 
\end{theorem}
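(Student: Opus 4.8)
The plan is to carry out the normal form (differentiation-by-parts in time) argument entirely in $H^s$-based spaces, exactly as for Theorem~\ref{thm:1} in the range $s>\frac12$, and then to read off the resulting a priori bound term by term rather than estimating it through the local theory. First I would pass to a gauge-adjusted interaction variable: writing $u(t,x)=\sum_k \hat u(k,t)e^{ikx}$, mass conservation makes $P=\frac1\pi\|u_0\|_2^2$ a genuine constant, and I set $\hat v(k,t)=e^{-it|k|^{2\alpha}+iPt}\hat u(k,t)$, so that $|\hat v(k,t)|=|\hat u(k,t)|$ and all $H^\sigma$ norms of $v$ and $u$ coincide. Since the modulating factor has modulus one and $\hat v(k,0)=\hat u_0(k)$,
\[ \big\|u(t)-e^{it(-\Delta)^\alpha-iPt}u_0\big\|_{H^{s+a}}=\big\|\la k\ra^{s+a}\big(\hat v(k,t)-\hat v(k,0)\big)\big\|_{\ell^2_k}, \]
so it suffices to control the right-hand side. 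Feeding \eqref{sch} in, $\pt\hat v(k)$ is a cubic sum over $k_1-k_2+k_3=k$ with oscillatory factor $e^{it\Omega}$, $\Omega=|k_1|^{2\alpha}-|k_2|^{2\alpha}+|k_3|^{2\alpha}-|k|^{2\alpha}$; the choice of $P$ is precisely what cancels the main resonant interaction $k_1=k$ or $k_3=k$, leaving only the harmless diagonal term $-|\hat v(k)|^2\hat v(k)$ (for which $\Omega=0$).

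Next comes the normal form. On the nonresonant set $\Omega\ne0$ I write $e^{it\Omega}=\frac{1}{i\Omega}\pt e^{it\Omega}$ and integrate by parts in $t$ over $[0,t]$. This yields a cubic boundary term carrying a factor $1/\Omega$, evaluated at the two endpoints, and a time integral in which one $\pt\hat v$ has been replaced — via the equation — by another cubic, producing a quintic integrand with the same $1/\Omega$. Thus
\[ \hat v(k,t)-\hat v(k,0)=\big(B(k,t)-B(k,0)\big)+\int_0^t \Big(Q(k,t')+D(k,t')\Big)\,dt', \]
where $B$ is the cubic boundary term, $Q$ the quintic term, and $D$ the nonoscillatory diagonal term. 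The entire gain of $a\le 2\alpha-1$ derivatives comes from the factor $1/\Omega$ together with the lower bound for the resonant function $\Omega$ established earlier in the paper, which on the support of the nonresonant sums makes $\la k\ra^{a}/|\Omega|$ a bounded, summable multiplier once $a\le 2\alpha-1$.

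It remains to estimate each piece in $\la k\ra^{s+a}\ell^2_k$. Placing $\la k\ra^s$ on the largest frequency (which also dominates $\la k\ra$), absorbing $\la k\ra^a$ into $1/\Omega$, and summing the two remaining factors by Cauchy--Schwarz — the step that forces the exponent $\frac12+$, since $\sum_k \la k\ra^{-1-}<\infty$, and forces $s>\frac12$ for the $L^\infty$ control of the product — the boundary terms are bounded by $\|u(t)\|_{H^s}\|u(t)\|_{H^{\frac12+}}^2$ and $\|u_0\|_{H^s}\|u_0\|_{H^{\frac12+}}^2$; this is the content of Proposition~\ref{prop:main}. The same mechanism applied under the time integral (Proposition~\ref{prop:mainvar}) gives $\|u(t')\|_{H^s}\big(\|u(t')\|_{H^{\frac12+}}^2+\|u(t')\|_{H^{\frac12+}}^4\big)$, the quartic factor reflecting the two extra low-frequency inputs created by differentiating the cubic, while the diagonal term obeys $\|\la k\ra^{s+a}|\hat v(k)|^2\hat v(k)\|_{\ell^2_k}\les \|u\|_{H^s}\|u\|_{H^{\frac12+}}^2$ directly (here $a\le 2\alpha-1\le1$ lets $\la k\ra^{a}|\hat v(k)|^2$ be controlled by $\|u\|_{H^{\frac12+}}^2$). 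Collecting the three contributions gives the displayed inequality on the whole maximal interval, since no step invoked the $X^{s,b}$ theory. I expect the genuine obstacle to be the quintic estimate for $Q$: one must spread a single $1/\Omega$ gain over five frequencies while keeping four of them in $H^{\frac12+}$, separating the region where the lower bound on $\Omega$ applies from the nested near-resonant configurations that arise when the inner cubic is itself resonant, and showing those obey the same bound by hand.

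Finally, the ``in particular'' statement is soft. For $\alpha>\frac12$ I may fix the exponent $\frac12+$ to be at most $\alpha$, so that $H^\alpha\hookrightarrow H^{\frac12+}$ and $\|u(t)\|_{H^{\frac12+}}\les\|u(t)\|_{H^\alpha}$. When $s\ge\alpha$ the data lies in $H^\alpha$, and mass together with energy conservation \eqref{conserve}, via the Gagliardo--Nirenberg inequality (and $\frac1\alpha<2$, so the focusing case is included), bound $\|u(t)\|_{H^\alpha}$ by a constant depending only on $\|u_0\|_{H^\alpha}\le\|u_0\|_{H^s}$, uniformly in $t$. Substituting this uniform bound for every $\|u\|_{H^{\frac12+}}$ factor collapses the quadratic and quartic terms into absolute constants and produces the clean estimate.
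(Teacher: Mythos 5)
Your proposal follows essentially the same route as the paper: gauge out $P$, differentiate by parts in time on the nonresonant frequencies, estimate the boundary, quintic and diagonal pieces by the $H^s$ multiplier bounds of Propositions~\ref{prop:main} and~\ref{prop:mainvar} (no $X^{s,b}$ spaces), and invoke conservation of mass and energy at the $H^\alpha$ level for the second assertion. The one discrepancy is that for $\alpha<1$ the paper applies the normal form only where $|k_1-k|\,|k_2-k_1|\ges \la k\ra^{2-2\alpha}$, so that Lemma~\ref{freq_est} makes the phase $\ges 1$, and puts the remaining small-phase nonresonant interactions into $R(u)$ without ever dividing by $\Omega$, whereas you divide by $\Omega$ on all of $\{\Omega\neq 0\}$; this variant still closes for $s>\frac12$ and $a\le 2\alpha-1$ (the extra region contributes a multiplier of size $\la k\ra^{2a-4s+4-4\alpha}$), but it is not literally what the cited propositions cover, so the small-phase region would need its own short estimate.
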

 Our next theorem establishes a similar result on the real line.  
\begin{theorem} \label{thm:n3} Consider the equation \eqref{sch} on $\R$.  Fix $\frac12<\alpha\leq 1.$ For any $s> \frac12 $, and $a < 2\alpha-1 $, we have   $u(t)-e^{it(-\Delta)^\alpha }u_0\in C^0_tH^{s+a}_x$ and 
\begin{multline*}
\|u(t)-e^{it(-\Delta)^\alpha }u_0\|_{ H^{s+a} } \les  
\|u_0\|_{H^s}(1+\|u_0\|_{H^{\frac12+}}^2)  +\|u(t)\|_{H^s} \|u(t)\|_{H^{\frac12+}}^2\\ +\int_0^t \|u(t^\prime)\|_{H^s}\big(\|u(t^\prime)\|_{H^{\frac12+}}^2+\|u(t^\prime)\|_{H^{\frac12+}}^4\big) dt^\prime
\end{multline*}
for all $t$ in the maximal interval of existence.

 In particular, for $s\geq \alpha$, and $a < 2\alpha-1 $, we have  
$$
\|u(t)-e^{it(-\Delta)^\alpha }u_0\|_{H^{s+a}} \les  
\|u_0\|_{H^s}   +\|u(t)\|_{H^s}  \\ +\int_0^t \|u(t^\prime)\|_{H^s} dt^\prime,
$$
for all $t$. 
\end{theorem}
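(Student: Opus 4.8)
The plan is to prove the real-line estimate by the same normal form (differentiation-by-parts in time) argument that underlies Theorem~\ref{thm:2}, carried out with frequency integrals in place of the lattice sums used on $\T$. Since $e^{it(-\Delta)^\alpha}$ is a unitary group on every $H^\sigma(\R)$ and the quantity to be estimated satisfies $u(t)-e^{it(-\Delta)^\alpha}u_0=e^{it(-\Delta)^\alpha}\big(v(t)-v(0)\big)$ with $v(t):=e^{-it(-\Delta)^\alpha}u(t)$, it suffices to bound $\|v(t)-v(0)\|_{H^{s+a}}$. Writing \eqref{sch} on the Fourier side, the profile obeys
\be
\partial_t \wh v(t,\xi)=\mp i\int_{\xi_1+\xi_2-\xi_3=\xi} e^{it\Omega}\,\wh v(\xi_1)\wh v(\xi_2)\overline{\wh v(\xi_3)}\,d\xi_1 d\xi_2,\qquad \Omega=|\xi_1|^{2\alpha}+|\xi_2|^{2\alpha}-|\xi_3|^{2\alpha}-|\xi|^{2\alpha},
\ee
so all the work takes place at the level of $v$, with $u$ itself furnished by the local theory of \cite{det},\cite{kwon}.

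The second step is the resonance analysis. The phase $\Omega$ vanishes exactly on $\{\xi_1=\xi_3\}\cup\{\xi_2=\xi_3\}$ (equivalently $\xi=\xi_2$ or $\xi=\xi_1$), and away from this set one has a lower bound of the form $|\Omega|\ges \la\xi_{\max}\ra^{2\alpha-1}(\text{gap})$, where $\xi_{\max}$ is the largest frequency. This is the algebraic source of the claimed gain: dividing by $\Omega$ transfers $2\alpha-1$ powers off the top frequency. The decisive difference from the torus is that on $\R$ the exactly resonant set is a lower-dimensional subset of the constraint hyperplane, hence of measure zero, so it makes no contribution to the integral above. This is precisely why no gauge factor $e^{-iPt}$ appears here: there is no resonant diagonal forcing a phase rotation, in contrast to the discrete sum on $\T$ that produces $P=\tfrac1\pi\|u_0\|_2^2$.

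The third step is the normal form transform. Split the frequency domain into a near-resonant piece $\{|\Omega|\le L\}$ and a non-resonant piece $\{|\Omega|>L\}$, with a (possibly frequency-dependent) threshold $L$. On the non-resonant piece use $e^{it'\Omega}=\tfrac1{i\Omega}\partial_{t'}e^{it'\Omega}$ and integrate by parts in $t'$ over $[0,t]$. This yields: (i) boundary terms at $t'=0$ and $t'=t$, which are trilinear in $v$ and, after invoking the $\tfrac1\Omega$ gain together with the resonance bound and Cauchy--Schwarz in the convolution variables, are controlled by $\|u_0\|_{H^s}\|u_0\|_{H^{\frac12+}}^2$ and $\|u(t)\|_{H^s}\|u(t)\|_{H^{\frac12+}}^2$; and (ii) a bulk term in which $\partial_{t'}$ falls on one of the three profiles and is replaced, via the equation, by the trilinear nonlinearity, producing a quintilinear expression bounded by $\int_0^t\|u\|_{H^s}\|u\|_{H^{\frac12+}}^4\,dt'$. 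The near-resonant piece $\{|\Omega|\le L\}$ is estimated directly, without integration by parts: there the constraint confines the convolution variables to a small set whose measure compensates for the missing $\tfrac1\Omega$ smoothing, and Cauchy--Schwarz in the continuous variables again extracts the $a$ derivatives, bounding this contribution by $\int_0^t\|u\|_{H^s}\|u\|_{H^{\frac12+}}^2\,dt'$. Assembling the four pieces gives exactly the right-hand side of the theorem; continuity in $t$ (membership in $C^0_tH^{s+a}_x$) follows from the continuity in $t$ of each term once the bounds are in hand. For $s\ge\alpha$ the Gagliardo--Nirenberg bound on the energy controls $\|u(t)\|_{H^{\frac12+}}\les\|u(t)\|_{H^\alpha}$ uniformly in $t$, which absorbs the auxiliary norms and collapses the estimate to the stated reduced form.

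I expect the main obstacle to be the multilinear estimates on $\R$, which is also where the loss $a<2\alpha-1$ and the $H^{\frac12+}$ norms are forced. Unlike on $\T$, one cannot sum an $\ell^2$ series over a lattice; instead, after applying the resonance lower bound and Cauchy--Schwarz, one is left with weighted convolution integrals in continuous variables that converge only with an $\eps$ to spare, which is exactly why the endpoint $a=2\alpha-1$ is lost on the line. The same integrals require two of the three inputs to be measured in $H^{\frac12+}$ rather than $L^2$, reflecting the failure of $H^{\frac12}\hookrightarrow L^\infty$, and this is also why the hypothesis is $s>\tfrac12$. The most delicate single estimate should be the non-resonant boundary term in the high-output regime where $\la\xi\ra^{s+a}$ is large: one must show that the $2\alpha-1$ derivatives liberated by $\tfrac1\Omega$ suffice to redistribute the weight $\la\xi\ra^{s+a}$ onto the three inputs while keeping two of them at the level $H^{\frac12+}$, and verifying that the resulting integral is finite for every $a<2\alpha-1$ is the technical heart of the argument.
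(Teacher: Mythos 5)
Your plan is essentially the paper's proof: the same differentiation-by-parts normal form with the resonant/non-resonant splitting dictated by the size of the phase, boundary terms estimated pointwise in time plus quintilinear bulk terms and the near-resonant trilinear term estimated under the time integral, with two factors in $H^{\frac12+}$ and one in $H^s$ (this is Proposition~\ref{prop:mainvarR}), and the $s\geq\alpha$ reduction via energy conservation. The only imprecision is your stated phase lower bound $|\Omega|\gtrsim\langle\xi_{\max}\rangle^{2\alpha-1}(\text{gap})$: the correct bound (Lemma~\ref{freq_est}) involves the \emph{product} of both frequency gaps divided by $\xi_{\max}^{2-2\alpha}$, and it is the region where one gap is smaller than $1$ --- absent on the torus --- that forces the strict inequality $a<2\alpha-1$, exactly as you anticipate.
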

We should mention that on the real line the normal form method can be, in principle, harder to apply.   An example of this phenomenon is the KdV equation where one can combine the normal form with $X^{s,b}$ estimates to obtain nonlinear smoothing on the torus, which fails for the corresponding problem on the real line, see e.g. \cite{etbook}. In the case of the fractional NLS we manage to overcome this problem by carefully estimating the small frequency interaction of the nonlinear wave functions (resonances) and obtain a similar result on both the real line and the torus.  
Concerning  Theorem~\ref{thm:n3}, we remark that one can probably lower the regularity requirement for the nonlinear smoothing on the real line, and obtain a smoothing estimate for $s\leq \frac12$ in parallel with Theorem~\ref{thm:1}.  

Our last theorem provides bounds for the higher order Sobolev norms of \eqref{sch}. In particular, we demonstrate how the smoothing estimates can be used to obtain polynomial-in-time growth bounds for the global solutions of equation \eqref{sch}.  This is a line of research initiated by Bourgain in \cite{boubounds} where he introduced a method for obtaining a priori bounds on Sobolev norms for equations lacking   suitable   conservation laws. 
 
\begin{theorem} \label{thm:n4} Consider the equation \eqref{sch} on $\T$.  Fix $\frac12<\alpha\leq 1 ,$ and   $s> \alpha$. We have 
$$\|u(t)\|_{H^s}\les  \la t\ra^{\max(1,\frac{s-\alpha}{2\alpha-1})}.$$
On $\R$ a similar inequality  inequality holds with an $\epsilon$ loss:
$$\|u(t)\|_{H^s}\les  \la t\ra^{\max(1,\frac{s-\alpha}{2\alpha-1}+)}.$$
\end{theorem}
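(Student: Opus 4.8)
The plan is to derive a single self-improving integral inequality for the top Sobolev norm, feeding the smoothing estimates of Theorem~\ref{thm:2} (on $\T$) and Theorem~\ref{thm:n3} (on $\R$) with the conserved energy as the only global input. The first step is the uniform energy-level bound. Since $\frac12<\alpha\le1$, equation~\eqref{sch} is mass- and energy-subcritical, so mass conservation, energy conservation~\eqref{conserve}, and the Gagliardo--Nirenberg inequality from the introduction (using $\frac1\alpha<2$ to absorb the quartic term in the focusing case) give
$$\sup_t\|u(t)\|_{H^\alpha}\les 1,$$
with constant depending only on $\|u_0\|_{H^\alpha}$. As $\alpha>\frac12$ we have $H^\alpha\hookrightarrow H^{\frac12+}$, so every $\|u(t)\|_{H^{\frac12+}}$ factor on the right-hand sides of Theorems~\ref{thm:2} and~\ref{thm:n3} is $\les1$ uniformly in $t$. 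This collapses both estimates to their ``in particular'' form: for every $\sigma\ge\alpha$ and admissible gain $a$ (i.e. $a\le 2\alpha-1$ on $\T$, $a<2\alpha-1$ on $\R$), and using that the free propagator $e^{it(-\Delta)^\alpha-iPt}$ preserves Sobolev norms,
$$\|u(t)\|_{H^{\sigma+a}}\les \|u_0\|_{H^{\sigma+a}}+\|u(t)\|_{H^\sigma}+\int_0^t\|u(t^\prime)\|_{H^\sigma}\,dt^\prime.$$

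Write $m_s(t):=\sup_{t^\prime\le t}\|u(t^\prime)\|_{H^s}$; this is finite for all $t$ by global well-posedness and persistence of regularity (\cite{det},\cite{kwon} together with the conservation laws), which is what makes the bootstrap legitimate. I distinguish two regimes according to the sign of $\frac{s-\alpha}{2\alpha-1}-1$. When $\alpha<s\le 3\alpha-1$, I apply the displayed estimate at level $\sigma=\alpha$ with gain $s-\alpha\le 2\alpha-1$; since $\|u_0\|_{H^s}$ and $\|u(t)\|_{H^\alpha}$ are both $\les1$, the time integral is the only growing term and
$$m_s(t)\les 1+\la t\ra\,\sup_{t^\prime\le t}\|u(t^\prime)\|_{H^\alpha}\les \la t\ra=\la t\ra^{\max(1,\frac{s-\alpha}{2\alpha-1})}.$$
When $s>3\alpha-1$, I instead apply the estimate at level $\sigma=s-a$ (with $a=2\alpha-1$, so $\sigma>\alpha>\frac12$), and interpolate the resulting lower norm against the conserved one: with $s-a=(1-\theta)\alpha+\theta s$, i.e. $1-\theta=\frac{a}{s-\alpha}$,
$$\|u(t^\prime)\|_{H^{s-a}}\le \|u(t^\prime)\|_{H^\alpha}^{1-\theta}\|u(t^\prime)\|_{H^s}^{\theta}\les m_s(t^\prime)^{\theta}.$$
Substituting and using that $m_s$ is nondecreasing gives the self-improving inequality $m_s(t)\les 1+\la t\ra\, m_s(t)^{\theta}$.

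The last step is to solve this inequality. Since $0<\theta<1$, the bound $X\les A+BX^\theta$ yields $X\les A+B^{1/(1-\theta)}$, so with $A\les1$ and $B=\la t\ra$ we obtain
$$m_s(t)\les \la t\ra^{\frac{1}{1-\theta}}=\la t\ra^{\frac{s-\alpha}{a}}=\la t\ra^{\frac{s-\alpha}{2\alpha-1}},$$
which is $\la t\ra^{\max(1,(s-\alpha)/(2\alpha-1))}$ in this regime. Combining the two regimes proves the torus bound. On $\R$ the only change is that the admissible gain is strictly less than $2\alpha-1$: taking $a=2\alpha-1-\eps$ replaces the exponent by $\frac{s-\alpha}{2\alpha-1-\eps}=\frac{s-\alpha}{2\alpha-1}+$, which is precisely the claimed $\eps$-loss.

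The main obstacle is arranging that every invocation of the smoothing estimate stays within its hypotheses: one must keep $\sigma\ge\alpha$ so that the conserved $H^\alpha$ bound controls all the $H^{\frac12+}$ factors, and keep the target regularity at or below $s$ so that the free-evolution contribution $\|u_0\|_{H^{\sigma+a}}$ remains a fixed constant rather than a growing quantity --- this is exactly why the single-application-plus-interpolation scheme (which never leaves $[\,\alpha,s\,]$) is preferable to a step-by-step climb, and why it produces the sharp fractional exponent rather than its ceiling. The genuinely delicate ingredient is upstream, in the real-line smoothing estimate of Theorem~\ref{thm:n3}, where the strict inequality $a<2\alpha-1$ (traced to the small-frequency resonant interactions discussed after that theorem) is unavoidable and forces the $\eps$-loss recorded here.
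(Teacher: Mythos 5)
Your proposal is correct and follows essentially the same route as the paper: the same split at $s=3\alpha-1$, the same application of the ``in particular'' form of Theorems~\ref{thm:2} and~\ref{thm:n3} with the conserved $H^\alpha$ norm as input, the same interpolation of $H^{s-2\alpha+1}$ between $H^\alpha$ and $H^s$ (your $\theta$ equals the paper's exponent $\frac{s-3\alpha+1}{s-\alpha}$), and the same resolution of the sublinear inequality $f(t)\les\la t\ra f(t)^{\theta}$. The only (welcome) additions are your explicit justification of the a priori finiteness of $m_s(t)$ and of the $\eps$-loss bookkeeping on $\R$, which the paper leaves implicit.
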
 

\vskip 0.05 in
\noindent

\vskip 0.05in
\noindent

The paper is organized as follows.  In Section 2 we introduce our notation and define the spaces that we use. In addition we state two elementary lemmas that we use in order to prove the multilinear smoothing estimates. Section 3 introduces the normal form method for the periodic fractional Schr\"odinger equation. At the end of the section we provide the proofs of the main theorems of our paper.  Section 4 contains the smoothing estimates for the fractional NLS on the real line.  Finally on an appendix we provide a new smoothing estimate for the $L^2$-critical quintic NLS on the real line. In particular, we prove that for any $s>\frac12$ the nonlinear part of a solution is half a derivative smoother than the initial data. This straightforward result seems to be new and in particular improves the 1d smoothing result in \cite{kervar}. This latter paper uses bilinear refinement of Strichartz estimates in the spirit of \cite{jbou} to obtain global smoothing estimates in all dimensions for a certain class of semilinear Schr\"odinger equations.

\section{Notation and Preliminaries}
Recall that for $s\geq 0$, $H^s(\T)$ is defined as a subspace of $L^2$ via the norm
$$
\|f\|_{H^s(\T)}:=\sqrt{\sum_{k\in\Z} \la k\ra^{2s} |\widehat{f}(k)|^2},
$$
where $\la k\ra:=(1+k^2)^{1/2}$ and $\widehat{f}(k)=\frac{1}{2\pi}\int_0^{2\pi}f(x)e^{-ikx} dx$ are the Fourier coefficients of $f$. Plancherel's theorem takes the form
$$\sum_{k\in\Z}  |\widehat{f}(k)|^2=\frac{1}{2\pi}\int_{0}^{2\pi}|f(x)|^2dx.$$
We denote the linear propagator of the Schr\"odinger equation as $e^{it(-\Delta)^\alpha}$, where it is defined on the Fourier side as $\widehat{(e^{it(-\Delta)^\alpha}f)}(n)=e^{ it n^{2\alpha}}\widehat{f}(n)$. Similarly, $|\nabla|^{\alpha}$ is defined as $\widehat{|\nabla|^{\alpha}f)}(n)=  n^{ \alpha} \widehat{f}(n)$.
We   also use $(\cdot)^+$ to denote $(\cdot)^\epsilon$ for all $\epsilon>0$ with implicit constants depending on $\epsilon$.

  The Bourgain spaces, $X^{s,b}$, will be defined as the closure of compactly supported smooth functions under the norm $$\|u\|_{X^{s,b}} :=\|e^{-it(-\Delta)^\alpha}u\|_{H^b_t(\mathbb{R})H^s_x(\mathbb{T})}=\|\langle \tau-|n|^{2\alpha} \rangle^{b} \langle n \rangle^s\widehat{u}(n,\tau)\|_{L_{\tau}^2 l^2_{n}}.$$ 
  On the real line, the definitions are similar.

We close this section by presenting two elementary lemmas that will be used repeatedly. For the proof of the first lemma see  the Appendix of \cite{erdtzi1}. For the second lemma, see \cite{det}, where it was proved for $m,n,k\in \Z$. The proof remains valid in the case that $m,n,k\in \R$. 
\begin{lemma}\label{lem:sums}   If  $\beta\geq \gamma\geq 0$ and $\beta+\gamma>1$, then
\be\nn
\sum_n\frac{1}{\la n-k_1\ra^\beta \la n-k_2\ra^\gamma}\lesssim \la k_1-k_2\ra^{-\gamma} \phi_\beta(k_1-k_2),
\ee
and 
\be\nn
\int_\R \frac{1}{\la \tau-k_1\ra^\beta \la \tau-k_2\ra^\gamma} d\, \tau \lesssim \la k_1-k_2\ra^{-\gamma} \phi_\beta(k_1-k_2),
\ee
where
 \be\nn
\phi_\beta(k):=\sum_{|n|\leq |k|}\frac1{\la n\ra^\beta}\sim \left\{\begin{array}{ll}
1, & \beta>1,\\
\log(1+\la k\ra), &\beta=1,\\
\la k \ra^{1-\beta}, & \beta<1.
 \end{array}\right.
\ee
\end{lemma}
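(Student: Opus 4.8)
Write $d:=k_1-k_2$ and observe that only $|d|$ enters the right-hand side, so I expect the whole analysis to be governed by the single scale $\la d\ra$. The plan is to split the summation index $n$ according to its distance to the two centers $k_1$ and $k_2$. First I would dispose of the degenerate regime $|d|\les 1$: there $\la n-k_1\ra\sim \la n-k_2\ra$ for every $n$, so the summand is comparable to $\la n-k_1\ra^{-(\beta+\gamma)}$, and the hypothesis $\beta+\gamma>1$ forces $\sum_n\la n-k_1\ra^{-(\beta+\gamma)}$ to converge to a constant, matching $\la d\ra^{-\gamma}\phi_\beta(d)\sim 1$.

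For $|d|\gg1$ I would partition the index set into the three regions $A=\{|n-k_1|\le |d|/2\}$, $B=\{|n-k_2|\le |d|/2\}$ (disjoint, since $|d|\le|n-k_1|+|n-k_2|$), and the complement $C$. On $A$ the triangle inequality gives $\la n-k_2\ra\ges\la d\ra$, so pulling out $\la n-k_2\ra^{-\gamma}\les\la d\ra^{-\gamma}$ leaves $\sum_{|m|\le|d|/2}\la m\ra^{-\beta}\les\phi_\beta(d)$ by the very definition of $\phi_\beta$; this produces the target bound $\la d\ra^{-\gamma}\phi_\beta(d)$ directly. On $C$ one has $\la n-k_1\ra,\la n-k_2\ra\ges\la d\ra$ with the two factors moreover comparable to one another, so the summand is $\les\la m\ra^{-(\beta+\gamma)}$ on a range $|m|\ges|d|$; since $\beta+\gamma>1$ the tail sum is $\les\la d\ra^{1-\beta-\gamma}=\la d\ra^{-\gamma}\la d\ra^{1-\beta}$, and $\la d\ra^{1-\beta}\les\phi_\beta(d)$ in all three cases of the definition of $\phi_\beta$.

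The region I expect to be the real obstacle is $B$, where the roles of the exponents are reversed. There the same reasoning yields only $\la d\ra^{-\beta}\phi_\gamma(d)$, and one must show $\la d\ra^{-\beta}\phi_\gamma(d)\les\la d\ra^{-\gamma}\phi_\beta(d)$, equivalently $\la d\ra^{\gamma-\beta}\phi_\gamma(d)\les\phi_\beta(d)$. This is precisely where the hypothesis $\beta\ge\gamma$ is needed, and I would verify it by a short case check against the piecewise formula for $\phi$: when $\beta<1$ both sides equal $\la d\ra^{1-\beta}$; when $\beta=1$ the left side is $\les1$ while $\phi_\beta\sim\log$; and when $\beta>1$ the left side tends to $0$ against $\phi_\beta\sim1$ (using $\gamma\le\beta$ in the subcase $\gamma>1$). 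Adding the three regional bounds gives the claim.

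Finally, the integral statement follows from the identical decomposition of $\R$ into $A,B,C$, with $\sum_{|m|\le|d|/2}\la m\ra^{-\beta}$ replaced by $\int_{|m|\le|d|/2}\la m\ra^{-\beta}\,dm$, whose size is again captured by $\phi_\beta(d)$ up to constants, and with the tail integral $\int_{|m|\ges|d|}\la m\ra^{-(\beta+\gamma)}\,dm\les\la d\ra^{1-\beta-\gamma}$ playing the role of the tail sum; no new ideas are required.
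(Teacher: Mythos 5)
The paper does not actually prove this lemma --- it defers to the appendix of \cite{erdtzi1} --- and your argument is correct and is essentially the standard proof given there: split according to whether $n$ lies within $|k_1-k_2|/2$ of $k_1$, of $k_2$, or of neither, and use $\beta\ge\gamma$ to absorb the contribution of the second region. The only micro-slip is in your case check for region $B$ when $\beta=1$: if $\gamma=1$ as well, the quantity $\la d\ra^{\gamma-\beta}\phi_\gamma(d)$ equals $\log(1+\la d\ra)$ rather than being $\lesssim 1$, but it then coincides with $\phi_\beta(d)$, so the required inequality still holds.
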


\begin{lemma}\label{freq_est} Fix $\alpha\in (1/2,1]$. For $m,n ,k\in \R$, we have 
  $$g(m,n,k):=|(k+n)^{2\alpha}-(k+m+n)^{2\alpha}+(k+m)^{2\alpha}-k^{2\alpha}|\ges \frac{|m||n|}{(|m|+|n|+|k|)^{2-2\alpha}},$$
  where the implicit constant depends on $\alpha$.   
\end{lemma}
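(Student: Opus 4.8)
The plan is to collapse the four-term expression into a difference of differences and recognize it as the integral of the second derivative of the dispersion symbol over a parallelogram of area $|m||n|$, on which that second derivative admits a clean uniform lower bound. Throughout I read $x^{2\alpha}$ as the even symbol $F(x):=|x|^{2\alpha}$, which is the relevant phase in $g$.

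First I would group the terms as
$$g(m,n,k)=\big|[F(k+n)-F(k)]-[F(k+n+m)-F(k+m)]\big|,$$
and set $G(y):=F(y+n)-F(y)$, so that $g=|G(k)-G(k+m)|$. Since $2\alpha\in(1,2]$, $F$ is $C^1$ with $F'(x)=2\alpha\,\sgn(x)|x|^{2\alpha-1}$, and $F'$ is absolutely continuous with $F''(x)=2\alpha(2\alpha-1)|x|^{2\alpha-2}\ge 0$ a.e.; crucially $F''$ is locally integrable because $2\alpha-2>-1$. Hence the fundamental theorem of calculus applies twice and yields
$$G(k+m)-G(k)=\int_k^{k+m}\big(F'(y+n)-F'(y)\big)\,dy=\int_k^{k+m}\int_y^{y+n}F''(z)\,dz\,dy.$$

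Next I would track signs. Because $F''\ge 0$, reorienting both intervals turns the signed iterated integral into a genuinely nonnegative one, so the outer absolute value becomes an equality:
$$g(m,n,k)=\int_{I_m}\Big(\int_{J_y}F''(z)\,dz\Big)dy,$$
where $I_m$ is the interval with endpoints $k,k+m$ (length $|m|$) and $J_y$ is the interval with endpoints $y,y+n$ (length $|n|$); degenerate cases $m=0$ or $n=0$ make both sides vanish. For every $(y,z)$ in this region one has $|y|\le |k|+|m|$ and $|z|\le |y|+|n|\le |k|+|m|+|n|=:S$. Since $2\alpha-2\le 0$, the map $t\mapsto t^{2\alpha-2}$ is nonincreasing, so $F''(z)\ge 2\alpha(2\alpha-1)S^{2\alpha-2}$ throughout the region, whose area is $\int_{I_m}|J_y|\,dy=|m||n|$. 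Therefore
$$g(m,n,k)\ges 2\alpha(2\alpha-1)\,S^{2\alpha-2}\,|m||n|=2\alpha(2\alpha-1)\frac{|m||n|}{(|m|+|n|+|k|)^{2-2\alpha}},$$
which is the claim, with implicit constant $2\alpha(2\alpha-1)$ depending only on $\alpha$.

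The point requiring care, and the only genuine obstacle, is the lack of smoothness of $F$ at the origin: when $\alpha<1$ the second derivative blows up like $|z|^{2\alpha-2}$ at $z=0$, so a naive pointwise mean value argument producing $g=|mn|\,F''(\eta)$ is not directly justified. This is exactly why I would work with the integral form of the fundamental theorem of calculus, which needs only local integrability of $F''$ (guaranteed by $2\alpha>1$); and in fact the singularity helps rather than hurts, since near the origin $F''$ is \emph{large}, so the uniform bound $F''\ge c_\alpha S^{2\alpha-2}$ over the whole parallelogram is never threatened. The remaining items — the sign conventions for $m,n$ and the vanishing degenerate cases — are routine.
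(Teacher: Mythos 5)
Your proof is correct. Note that the paper does not actually supply a proof of this lemma: it cites \cite{det}, where the estimate is proved for integer frequencies, and merely remarks that the argument survives for $m,n,k\in\R$. So there is no in-paper argument to match line by line; what you give is a clean, self-contained proof. Your key moves are sound: you read $k^{2\alpha}$ as $|k|^{2\alpha}$ (consistent with the paper's $X^{s,b}$ weight $\langle \tau-|n|^{2\alpha}\rangle$), you recognize $g$ as the mixed second difference of the convex symbol $F(x)=|x|^{2\alpha}$, and you represent it as $\int_{I_m}\int_{J_y}F''(z)\,dz\,dy$ over a region of area $|m||n|$. Positivity of $F''$ then kills any possible cancellation (the iterated integral has the fixed sign $\sgn(m)\sgn(n)$, so the absolute value is the unsigned double integral), and since $t\mapsto t^{2\alpha-2}$ is nonincreasing and every $z$ in the region satisfies $|z|\le |k|+|m|+|n|=:S$, you get the uniform pointwise bound $F''(z)\ge 2\alpha(2\alpha-1)S^{2\alpha-2}$, hence the claim with constant $2\alpha(2\alpha-1)$. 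You also correctly identify and handle the one genuine delicacy, the failure of $F$ to be $C^2$ at the origin for $\alpha<1$: using absolute continuity of $F'$ and local integrability of $F''$ (valid because $2\alpha-2>-1$) in place of a pointwise mean value theorem is exactly the right fix, and the singularity only strengthens the lower bound. The constant degenerates as $\alpha\downarrow\tfrac12$, which the lemma explicitly allows. Compared with the case-by-case analysis on signs and relative sizes of frequencies that one often sees for such resonance estimates, your convexity argument is more transparent and handles all real $m,n,k$ at once.
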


\section{Fractional NLS on the Torus}
As in \cite{et} we will apply a normal form transform on the Fourier side. 
After the change of variable $u(x,t)\to u(x,t) e^{iPt}$, where $P=\frac{1}{\pi}\|u_0\|_2^2$,   equation \eqref{sch}
becomes
\be\label{wickNLS}
iu_t+(-\Delta)^\alpha u-|u|^2u+Pu=0,\,\,\,\,\,\, t\in \R,\,\,\,x\in\T,
\ee
with initial data in $u_0\in H^s(\T)$, $s>0$. 

Note the following identity which follows from Plancherel's theorem:
\begin{multline}\label{res_decomp}
\widehat{|u|^2u}(k)=\sum_{k_1,k_2 }   \widehat u(k_1)\overline{\widehat u(k_2)}\widehat u(k-k_1+k_2)\\ =\frac1\pi
\|u\|_2^2\widehat{u}(k)-|\widehat u(k)|^2\widehat{u}(k)+\sum_{k_1\neq k, k_2\neq k_1}  \widehat u(k_1)\overline{\widehat u(k_2)}\widehat u(k-k_1+k_2).
\end{multline} 
Using \eqref{res_decomp}, and letting $v_k(t):=e^{itk^{2\alpha}}u(\widehat k, t)$, we rewrite the equation \eqref{wickNLS} as
\begin{multline}\label{eq:v_normal}
i\partial_t v_k=-|v_k|^2v_k+\sum_{k-k_1+k_2-k_3=0 \atop{k_1\neq k, k_2\neq k_1}} e^{it(k^{2\alpha}-k_1^{2\alpha} +k_2^{2\alpha}-k_3^{2\alpha})} v_{k_1}\overline{v_{k_2}}v_{k_3}\\
= -|v_k|^2v_k+\sum_{k-k_1+k_2-k_3=0 \atop{0<|k_1-k| |k_2-k_1|\ll \la k\ra^{2-2\alpha}}} e^{it(k^{2\alpha}-k_1^{2\alpha} +k_2^{2\alpha}-k_3^{2\alpha})} v_{k_1}\overline{v_{k_2}}v_{k_3} \\  + \sum_{k-k_1+k_2-k_3=0 \atop{|k_1-k| |k_2-k_1|\ges \la k\ra^{2-2\alpha}}} e^{it(k^{2\alpha}-k_1^{2\alpha} +k_2^{2\alpha}-k_3^{2\alpha})} v_{k_1}\overline{v_{k_2}}v_{k_3}.
\end{multline}
Note that the  sum in the second line above is zero when $\alpha=1$. 

We now apply a normal form transform to the last sum. By Lemma~\ref{freq_est}, for $k,m,n\in \Z\setminus\{0\}$, we have 
$$
g(m,n,k) \ges 1
$$
if $|mn| \ges |k|^{2-2\alpha}$. Therefore, the phase in the last sum is $\ges 1$. Writing
$$
e^{it(k^{2\alpha}-k_1^{2\alpha} +k_2^{2\alpha}-k_3^{2\alpha})}=-i \frac{ \partial_t e^{it(k^{2\alpha}-k_1^{2\alpha} +k_2^{2\alpha}-k_3^{2\alpha})}}{k^{2\alpha}-k_1^{2\alpha} +k_2^{2\alpha}-k_3^{2\alpha}}
$$
we have
\begin{multline} \label{eq:NR}
\sum_{k-k_1+k_2-k_3=0 \atop{|k_1-k| |k_2-k_1|\ges \la k\ra^{2-2\alpha}}} e^{it(k^{2\alpha}-k_1^{2\alpha} +k_2^{2\alpha}-k_3^{2\alpha})} v_{k_1}\overline{v_{k_2}}v_{k_3}  \\
=-i \partial_t \Big(\sum_{k-k_1+k_2-k_3=0 \atop{|k_1-k| |k_2-k_1|\ges \la k\ra^{2-2\alpha}}} \frac{e^{it(k^{2\alpha}-k_1^{2\alpha} +k_2^{2\alpha}-k_3^{2\alpha})}}{k^{2\alpha}-k_1^{2\alpha} +k_2^{2\alpha}-k_3^{2\alpha}} v_{k_1}\overline{v_{k_2}}v_{k_3}\Big)\\
 +i  \sum_{k-k_1+k_2-k_3=0 \atop{|k_1-k| |k_2-k_1|\ges \la k\ra^{2-2\alpha}}} \frac{e^{it(k^{2\alpha}-k_1^{2\alpha} +k_2^{2\alpha}-k_3^{2\alpha})}}{k^{2\alpha}-k_1^{2\alpha} +k_2^{2\alpha}-k_3^{2\alpha}} \partial_t (v_{k_1}\overline{v_{k_2}}v_{k_3}). 
\end{multline}
In the last sum, by symmetry, it suffices to consider the cases when the derivative hits $v_{k_2}$ and $v_{k_3}$.  Using the equation \eqref{eq:v_normal}, we can rewrite this sum as   
\begin{multline*} 
  2\sum_{k-k_1+k_2-k_3+k_4-k_5=0 \atop{|k_1-k| |k_2-k_1|\ges \la k\ra^{2-2\alpha}}} \frac{e^{it(k^{2\alpha}-k_1^{2\alpha} +k_2^{2\alpha}-k_3^{2\alpha}+k_4^{2\alpha}-k_5^{2\alpha})}}{k^{2\alpha}-k_1^{2\alpha} +k_2^{2\alpha}-(k-k_1+k_2)^{2\alpha}} v_{k_1}\overline{v_{k_2}}v_{k_3}\overline{v_{k_4}}v_{k_5} \\
- \sum_{k-k_1+k_2-k_3+k_4-k_5=0 \atop{|k_1-k| |k-k_3|\ges \la k\ra^{2-2\alpha}}} \frac{e^{it(k^{2\alpha}-k_1^{2\alpha} +k_2^{2\alpha}-k_3^{2\alpha}+k_4^{2\alpha}-k_5^{2\alpha})}}{k^{2\alpha}-k_1^{2\alpha} +(k-k_1-k_3)^{2\alpha}-k_3^{2\alpha}} v_{k_1}\overline{v_{k_2}}v_{k_3}\overline{v_{k_4}}v_{k_5} \\
+ \sum_{k-k_1+k_2-k_3=0 \atop{|k_1-k| |k_2-k_1|\ges \la k\ra^{2-2\alpha}}} \frac{e^{it(k^{2\alpha}-k_1^{2\alpha} +k_2^{2\alpha}-k_3^{2\alpha})}}{k^{2\alpha}-k_1^{2\alpha} +k_2^{2\alpha}-k_3^{2\alpha}} v_{k_1}\overline{v_{k_2}}v_{k_3}(|v_{k_2}|^2-2|v_{k_3}|^2). 
\end{multline*} 
Using this formula in \eqref{eq:NR}  and \eqref{eq:v_normal}, and unraveling the change of variable, we have 
\be\label{preduhamel}
i\partial_t \left(e^{it(-\Delta)^\alpha} u + e^{it(-\Delta)^\alpha}  B(u) \right)=  e^{it(-\Delta)^\alpha} \big(R(u) + NR_1(u) +NR_2(u) +NR_3(u)\big),
\ee
where
$$
\widehat{B(u)}(k)= \sum_{k-k_1+k_2-k_3=0 \atop{|k_1-k| |k_2-k_1|\ges \la k\ra^{2-2\alpha}}} \frac{u_{k_1}\overline{u_{k_2}}u_{k_3}}{k^{2\alpha}-k_1^{2\alpha} +k_2^{2\alpha}-k_3^{2\alpha}}  ,
$$
$$
\widehat{R(u)}(k)=-|u_k|^2u_k+\sum_{k-k_1+k_2-k_3=0 \atop{0<|k_1-k| |k_2-k_1|\ll \la k\ra^{2-2\alpha}}} u_{k_1}\overline{u_{k_2}}u_{k_3},
$$
$$
\widehat{NR_1(u)}(k)=2\sum_{k-k_1+k_2-k_3+k_4-k_5=0 \atop{|k_1-k| |k_2-k_1|\ges \la k\ra^{2-2\alpha}}} \frac{u_{k_1}\overline{u_{k_2}}u_{k_3}\overline{u_{k_4}}u_{k_5}}{k^{2\alpha}-k_1^{2\alpha} +k_2^{2\alpha}-(k-k_1+k_2)^{2\alpha}}, 
$$
$$
\widehat{NR_2(u)}(k)=- \sum_{k-k_1+k_2-k_3+k_4-k_5=0 \atop{|k_1-k| |k-k_3|\ges \la k\ra^{2-2\alpha}}} \frac{u_{k_1}\overline{u_{k_2}}u_{k_3}\overline{u_{k_4}}u_{k_5} }{k^{2\alpha}-k_1^{2\alpha} +(k-k_1-k_3)^{2\alpha}-k_3^{2\alpha}}, 
$$
$$
\widehat{NR_3(u)}(k)= \sum_{k-k_1+k_2-k_3=0 \atop{|k_1-k| |k_2-k_1|\ges \la k\ra^{2-2\alpha}}} \frac{u_{k_1}\overline{u_{k_2}}u_{k_3}(|u_{k_2}|^2-2|u_{k_3}|^2)}{k^{2\alpha}-k_1^{2\alpha} +k_2^{2\alpha}-k_3^{2\alpha}}. 
$$
The following proposition provides Sobolev space estimates for the terms appearing in \eqref{preduhamel}. These bounds are sufficient for the proof of Theorem~\ref{thm:1} in the case $ s>\frac12$. For smaller $s$, we will need $X^{s,b}$ spaces, see Proposition~\ref{prop:main2} below.  
\begin{prop}\label{prop:main} Fix $\frac12<\alpha\leq 1.$ For any $s> \frac34-\frac\alpha2 $, and $a\leq \min(2\alpha-1,2s+\alpha-1) $ (the inequality has to be strict if the minimum is  $2s+\alpha-1$), we have  
$$
\|B(u)\|_{H^{s+a}}\les \|u\|_{H^s}^3,
$$
$$
\|R(u)\|_{H^{s+a}}\les \|u\|_{H^s}^3,
$$
$$
\|NR_3(u)\|_{H^{s+a}}\les \|u\|_{H^s}^5.
$$
If  $s>\frac12$, then for any $a\leq 2\alpha-1$, we have
$$
\|NR_j(u)\|_{H^{s+a}}\les \|u\|_{H^s}^5,\,\,\,\,j=1,2.
$$
\end{prop}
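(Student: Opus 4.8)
The plan is to normalize and reduce all five bounds to a single cubic smoothing estimate. Set $f(k):=\la k\ra^s|\widehat u(k)|\ge 0$, so that $\|f\|_{\ell^2}=\|u\|_{H^s}$, and estimate every expression through its absolute value. Writing $k_1=k+m$, $k_2=k+m+n$, $k_3=k+n$ (so the convolution constraint $k-k_1+k_2-k_3=0$ is automatic and $|k_1-k||k_2-k_1|=|m||n|$), Lemma~\ref{freq_est} lets me replace each denominator by its lower bound $|k^{2\alpha}-k_1^{2\alpha}+k_2^{2\alpha}-k_3^{2\alpha}|\ges |m||n|\,N^{2\alpha-2}$, where $N:=|k|+|m|+|n|$. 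The key structural fact is that the identity $k=k_1-k_2+k_3$ forces the two largest of $\{|k|,|k_1|,|k_2|,|k_3|\}$ to be comparable, and both comparable to $N$; this lets me transfer the output weight $\la k\ra^{s+a}\le N^{s+a}$ onto the largest input factor while the remaining frequency weights are left to absorb the gap sums in $m,n$.

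Before touching the cubic term I would dispatch the three ``higher'' terms by reduction. For $NR_3$ the extra factor obeys $|u_{k_2}|^2,\,|u_{k_3}|^2\le\|u\|_{\ell^2}^2\le\|u\|_{H^s}^2$ uniformly in $k_j$, so $NR_3$ is pointwise dominated, coefficient by coefficient, by $\|u\|_{H^s}^2$ times the absolute-value integrand of $B$; once the $B$ bound is known this yields $\|NR_3\|_{H^{s+a}}\les\|u\|_{H^s}^5$ under the same hypotheses. For $NR_1$ and $NR_2$ I would perform the summation over the two ``free'' indices first: since their denominators depend only on $(k,k_1,k_2)$ resp. $(k,k_1,k_3)$, the inner cubic convolution over the remaining indices reconstitutes $\widehat{|u|^2u}$ resp. $\widehat{\overline{|u|^2u}}$ at the appropriate frequency. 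Thus $NR_1,NR_2$ become instances of the \emph{same} bilinear-gain operator as $B$, but with one input replaced by $w=|u|^2u$. Feeding $w$ through the unbalanced algebra estimate $\|w\|_{H^\sigma}\les\|u\|_{H^\sigma}\|u\|_{H^{\frac12+}}^2$ supplies enough regularity on that slot to bypass the delicate resonant analysis that caps the cubic gain; this simultaneously removes the $2s+\alpha-1$ restriction (leaving only $a\le 2\alpha-1$) and isolates the threshold $s>\frac12$, and since $\|u\|_{H^{\frac12+}}\le\|u\|_{H^s}$ it gives $\|NR_j\|_{H^{s+a}}\les\|u\|_{H^s}^5$.

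It remains to prove the cubic estimates for $B$ and $R$, which carry the whole difficulty. For $B$ I would split according to which of $k_1,k_2,k_3$ is the largest frequency (comparable to $N$). After placing $\la k\ra^{s+a}\le N^{s+a}$ on that factor and using $a\le 2\alpha-1$ to absorb the $N^{2-2\alpha}$ produced by $1/g$, what survives is a double sum in the gap variables $m,n$ weighted by $\frac{1}{|m||n|}$ and by negative powers of the two remaining frequency weights. I would then apply Cauchy--Schwarz in $k$ and sum out the two gaps with Lemma~\ref{lem:sums}; its hypothesis $\beta+\gamma>1$ translates exactly into the condition $s>\frac34-\frac\alpha2$, and the borderline logarithmic case of $\phi_\beta$ forces the inequality to be strict when $a=2s+\alpha-1$. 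The cap $a\le 2s+\alpha-1$ itself appears in the regime where the output frequency $|k|$ is much smaller than the inputs, so that $|m|\sim|n|\sim N$ and only the two input weights $\la k_j\ra^{-s}$ are available to pay for the derivative, which limits the gain to $2s+\alpha-1$.

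The genuinely delicate point is the resonant term $R$. The diagonal piece $|u_k|^2u_k$ is immediate, since $\la k\ra^{a-2s}\le1$ (because $a\le 2s+\alpha-1\le 2s$) and $\ell^6\hookrightarrow\ell^2$ give $\le\|f\|_{\ell^2}^3$. For the off-diagonal resonant sum all four frequencies are comparable to $\la k\ra$, so the three Sobolev weights collapse to $\la k\ra^{-3s}$ and no denominator is available; the \emph{only} source of smoothing is the smallness of the resonant region $1\le|m||n|\ll\la k\ra^{2-2\alpha}$. I would extract this gain by a dyadic decomposition of the $(m,n)$ box followed by Cauchy--Schwarz, which costs a factor $(\la k\ra^{2-2\alpha})^{1/2}=\la k\ra^{1-\alpha}$; balancing this loss against the surviving weight $\la k\ra^{a-2s}$ is exactly what produces the constraint $a\le 2s+\alpha-1$ together with the threshold $s>\frac34-\frac\alpha2$. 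I expect this near-diagonal bookkeeping---keeping both the counting and the Cauchy--Schwarz sharp enough to reach $a$ all the way up to $\min(2\alpha-1,2s+\alpha-1)$ rather than losing an $\epsilon$---to be the main obstacle of the whole proposition.
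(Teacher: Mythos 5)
Your proposal is correct and follows essentially the same route as the paper: the same change of variables $k_1=k+m$, $k_2=k+m+n$, $k_3=k+n$ combined with Lemma~\ref{freq_est} and Cauchy--Schwarz plus Lemma~\ref{lem:sums} for $B$, the reduction of $NR_3$ to $B$ via $\sup_k|u_k|\le\|u\|_{H^s}$, the reduction of $NR_1,NR_2$ to the cubic multiplier by summing out the two free indices (equivalently, the algebra property, which is exactly what the paper does and is why $s>\frac12$ is needed there), and the counting of the resonant box $|mn|\ll\la k\ra^{2-2\alpha}$ for $R$, which is indeed the source of the constraint $a<2s+\alpha-1$. The only cosmetic difference is that you organize the $B$-estimate by the largest input frequency while the paper splits on $|m|$ versus $|k|$, and you attribute the threshold $s>\frac34-\frac\alpha2$ to Lemma~\ref{lem:sums} where in the paper's own case analysis the binding constraints are slightly different; neither affects correctness.
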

\begin{proof}
$$
\|B(u)\|_{H^{s+a}}^2\les \Big\|\sum_{k-k_1+k_2-k_3=0 \atop{|k_1-k| |k_2-k_1|\ges \la k\ra^{2-2\alpha}}} \frac{ \la k\ra^{s+a}|u_{k_1}| |u_{k_2}|   |u_{k_3}| }{|k^{2\alpha}-k_1^{2\alpha} +k_2^{2\alpha}-k_3^{2\alpha}|}  \Big\|_{\ell^2_k}^2.
$$
By Cauchy Schwarz inequality in  the sum we bound this by
$$
\|u\|_{H^s}^6 \sup_k \sum_{k-k_1+k_2-k_3=0 \atop{|k_1-k| |k_2-k_1|\ges \la k\ra^{2-2\alpha}}} \frac{ \la k\ra^{2s+2a} \la k_1\ra^{-2s} \la k_2\ra^{-2s} \la k_3\ra^{-2s}   }{|k^{2\alpha}-k_1^{2\alpha} +k_2^{2\alpha}-k_3^{2\alpha}|^2}.  
$$
Renaming the variables $k_1=k+m$, $k_2=k+m+n$, $k_3=k+n$,  we  rewrite the supremum above as
\be\label{Bestimate}
\sup_k \sum_{ |mn|\ges \la k\ra^{2-2\alpha} } \frac{ \la k\ra^{2s+2a} \la k+m\ra^{-2s} \la k+m+n\ra^{-2s} \la k+n\ra^{-2s}   }{g(m,n,k)^2}.  
\ee
Using Lemma~\ref{freq_est}, we bound the sum above by 
\be \label{temp1}
\sum_{|mn|\ges \la k\ra^{2-2\alpha} \atop{|m|\geq |n|}} \frac{ \la k\ra^{2s+2a}  (|m|  +|k|)^{4-4\alpha}}{ m^2 n^2   \la k+m\ra^{ 2s} \la k+m+n\ra^{ 2s} \la k+n\ra^{ 2s} },  
\ee
which we estimate in three separate regions. \\
Case 1. $|m|\gg |k|$. The sum is  
  \begin{align*} 
  \les   \sum_{ |m|\geq |n|>0} \frac{ \la k\ra^{2s+2a}  |m|^{2-4\alpha-2s}}{  n^2   \la k+m+n\ra^{ 2s} \la k+n\ra^{ 2s} }.
  \end{align*}
  For $s>\frac12$, we replace $m$ with $k$ in the numerator, and sum in $m$ and then in $n$ (using Lemma~\ref{lem:sums}) to obtain
    \begin{align*} 
 \les   \sum_{ |m|\geq |n|>0} \frac{ \la k\ra^{2a+2-4\alpha}  }{  n^2   \la k+m+n\ra^{ 2s} \la k+n\ra^{ 2s} } \les   \sum_{   |n|>0} \frac{ \la k\ra^{ 2a+2-4\alpha }}{  n^2     \la k+n\ra^{ 2s} } \les \la k\ra^{2a+2-4\alpha-\min(2s,2)} ,
  \end{align*}
 which is bounded in $k$ provided that $a\leq 2\alpha-1+\min(s,1).$ 
 
 For $0\leq s\leq \frac12$, the sum is bounded by 
$$
 \sum_{ |m|\geq |n|>0} \frac{ \la k\ra^{2a+3-4\alpha-2s+\epsilon}}{  n^2   \la k+m+n\ra^{ 2s}  |m|^{1-2s+\epsilon}\la k+n\ra^{ 2s} }  
  \les  \sum_{  |n|>0} \frac{ \la k\ra^{2a+3-4\alpha-2s+\epsilon}}{  n^2   \la k+n\ra^{ 2s+\epsilon} } 
  \les  \la k\ra^{2a+3-4\alpha-4s },
  $$    
  which is bounded in $k$ provided that $a\leq 2s+2\alpha-\frac32.$ 
  
\noindent
Case 2. $|m|\approx |k|$. In this region we have the bound
  $$
\sum_{ |k|\approx |m|\geq |n|>0} \frac{ \la k\ra^{2s+2a+2-4\alpha}}{  n^2   \la k+m\ra^{ 2s} \la k+m+n\ra^{ 2s} \la k+n\ra^{ 2s} } 
\les \sum_{  |n|>0} \frac{ \la k\ra^{2s+2a+2-4\alpha} \mathcal{A}}{  n^2    \la k+n\ra^{ 2s} }, 
$$
  where $\mathcal A = |n|^{-2s}$ if $s>\frac12$ and   $\mathcal A=1$ if $ \frac12\geq s>1/4$. 
This implies the bound   
$ \la k\ra^{ 2a+2-4\alpha }$, which is acceptable provided that $a\leq 2\alpha -1 .$
   
\noindent
Case 3. $|m|\ll |k|$. In this region we have the bound 
$$
\sum_{|mn|\ges \la k\ra^{2-2\alpha} \atop{0<|n|\leq |m|\ll|k|} } \frac{ \la k\ra^{ 2a-4s +4-4\alpha}   }{ m^2 n^2    } \les \sum_{ 0<|n| \ll|k|  } \frac{ \la k\ra^{ 2a-4s +2-2\alpha}   }{|n|   } \les \la k\ra^{ 2a-4s +2-2\alpha+}  .   
$$
This is bounded in $k$ provided that $a<2s+\alpha-1 $.

The bound for $NR_3(u)$ follows from the bound for $B(u)$ above since 
$$\sup_k |u_k|\leq \|u_k\|_{\ell^2}\les \|u\|_{H^s}.$$

Note that the first summand in the definition of $R(u)$ is in $H^{s+a}$ for $a\leq 2s$.  For the second summand in the definition of $R(u)$, using Cauchy Schwarz inequality,  it suffices to bound
$$\sup_k \sum_{|mn|\ll \la k\ra^{2-2\alpha} \atop{|m|\geq |n|>0}} \frac{ \la k\ra^{2s+2a}  }{    \la k+m\ra^{ 2s} \la k+m+n\ra^{ 2s} \la k+n\ra^{ 2s} }.   
$$
Since $\alpha >\frac12$, we have $|n|\leq |m|\ll |k|$ in the sum. Thus we can bound the sum by 
$$  \sum_{|mn|\ll \la k\ra^{2-2\alpha} } \la k\ra^{ 2a-4s} \les \la k\ra^{ 2a-4s+2-2\alpha+}.  
$$
This is bounded in $k$ provided that $a<2s+\alpha-1$.

By a similar change of variable, to estimate $NR_1(u)$, we need to consider
$$
\sup_k   \sum_{k+n-k_3+k_4-k_5=0 \atop{|m| |n|\ges \la k\ra^{2-2\alpha}}}  \frac{\la k \ra^{2s+2a} }{g(m,n,k)^2   \la k+m\ra^{ 2s} \la k+m+n\ra^{ 2s} \la k_3\ra^{ 2s}\la k_4\ra^{ 2s}\la k_5\ra^{ 2s} }.
$$
Summing in $k_4$ and $k_3$ for $s>\frac12$, we bound this by
$$
\sup_k \sum_{ |m| |n|\ges \la k\ra^{2-2\alpha} }\frac{\la k \ra^{2s+2a}  }{g(m,n,k)^2 \la k+m\ra^{ 2s} \la k+m+n\ra^{ 2s} \la k+n\ra^{ 2s}   }, 
$$
which was considered before, see \eqref{Bestimate}.   

Analogously, for $NR_2(u)$, the multiplier we need to bound is
$$
\sup_k \sum_{k_2-(k+m+n) +k_4-k_5=0 \atop{|m| |n|\ges \la k\ra^{2-2\alpha}}} \frac{\la k \ra^{2s+2a} }{g(m,n,k)^2 \la k+m\ra^{ 2s} \la k +n\ra^{ 2s} \la k_2\ra^{ 2s}\la k_4\ra^{ 2s}\la k_5\ra^{ 2s}  }.
$$
Summing in $k_4$ and $k_2$, we reduce  this case to   \eqref{Bestimate}, too.
\end{proof}
The following proposition, which follows from the proof of Proposition~\ref{prop:main}  in a fairly simple way, will be used in the proof of Theorem~\ref{thm:2}.
\begin{prop}\label{prop:mainvar} Fix $\frac12<\alpha\leq 1$, and $s_0>\frac12$. For any $s\geq s_0$, and $a\leq  2\alpha-1 $, we have  
$$
\|B(u)\|_{H^{s+a}}\les \|u\|_{H^s}\|u\|_{H^{s_0}}^2,
$$
$$
\|R(u)\|_{H^{s+a}}\les \|u\|_{H^s}\|u\|_{H^{s_0}}^2,
$$
$$
\|NR_j(u)\|_{H^{s+a}}\les \|u\|_{H^s}\|u\|_{H^{s_0}}^4,\,\,\,\,j=1,2,3.
$$
\end{prop}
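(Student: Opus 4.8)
The plan is to re-run the proof of Proposition~\ref{prop:main}, but to \emph{redistribute} the Sobolev weights so that a single factor of $u$ carries the high regularity $s$ while the remaining factors carry only $s_0$. The starting observation is that, since $s\geq s_0>\frac12$, we have $\min(2\alpha-1,2s+\alpha-1)=2\alpha-1$, so the hypothesis $a\leq 2\alpha-1$ places us squarely in the regime handled by Proposition~\ref{prop:main}; moreover the fixed $\frac12$-margin in $s_0$ will play the role that the conditions $s>\frac34-\frac\alpha2$ and $a<2s+\alpha-1$ played there.

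First I would treat $B(u)$. Using $k=k_1-k_2+k_3$ one has $\la k\ra^s\les \la k_1\ra^s+\la k_2\ra^s+\la k_3\ra^s$, so that $\la k\ra^{s+a}|\widehat{B(u)}(k)|$ is bounded by a sum of three terms, in each of which one factor $\la k_j\ra^s|u_{k_j}|$ is singled out. Consider, by symmetry, the term carrying $\la k_1\ra^s$: I would assign the $H^s$-weight to $u_{k_1}$ and write the other two factors as $\la k_2\ra^{-s_0}(\la k_2\ra^{s_0}|u_{k_2}|)$ and $\la k_3\ra^{-s_0}(\la k_3\ra^{s_0}|u_{k_3}|)$. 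After Cauchy--Schwarz in the two free summation indices $m,n$, the key algebraic point is that the change of variables $(k,m,n)\leftrightarrow(k_1,k_2,k_3)$ is a bijection, so the three resulting $\ell^2$ sums \emph{factor} and produce exactly $\|u\|_{H^s}\|u\|_{H^{s_0}}^2$. It then remains to bound the multiplier
$$
\sup_k \la k\ra^{2a}\sum_{|mn|\ges\la k\ra^{2-2\alpha}}\frac{\la k+m+n\ra^{-2s_0}\la k+n\ra^{-2s_0}}{g(m,n,k)^2}\les 1.
$$

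This multiplier is of exactly the type estimated in \eqref{Bestimate} and the lines following it, now with $s_0$ playing the role of the regularity on the two summed factors; I would verify it by the same three-region analysis ($|m|\gg|k|$, $|m|\approx|k|$, $|m|\ll|k|$), invoking Lemma~\ref{freq_est} for the lower bound on $g$ and Lemma~\ref{lem:sums} for the $n$-summation. In every region the needed summability comes from $2s_0>1$, and the decisive exponent of $\la k\ra$ is nonpositive precisely when $a\leq 2\alpha-1$; because $s_0>\frac12$ \emph{strictly}, the small-divisor region that forced the strict constraint $a<2s+\alpha-1$ in Proposition~\ref{prop:main} now leaves a margin, so the non-strict bound $a\leq 2\alpha-1$ suffices. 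The estimate for $R(u)$ is identical, and $NR_3(u)$ follows from the $B(u)$ bound together with $\sup_k|u_k|\leq\|u\|_{\ell^2}\les\|u\|_{H^{s_0}}$, which supplies the extra $\|u\|_{H^{s_0}}^2$.

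For the quintic terms $NR_1(u)$ and $NR_2(u)$ I would follow the reduction already used in Proposition~\ref{prop:main}: summing out two of the five frequencies, now at regularity $s_0$ rather than $s$, reduces the multiplier to the cubic expression \eqref{Bestimate}, the two eliminated factors contributing $\|u\|_{H^{s_0}}^2$ and the cubic core contributing $\|u\|_{H^s}\|u\|_{H^{s_0}}^2$ by the argument above, for a total of $\|u\|_{H^s}\|u\|_{H^{s_0}}^4$. The one point requiring care, and the main (mild) obstacle, is the bookkeeping of the weight redistribution: one must check that discarding the decay $\la k_1\ra^{-2s}$ present in Proposition~\ref{prop:main} (now absorbed entirely into the $H^s$ norm) does not destroy convergence of the $m$-sum. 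This is exactly where $s_0>\frac12$ enters, since the decay of the two kept factors then suffices on its own; verifying this in the region $|m|\gg|k|$, where $k_1$ is the large frequency, is the single spot that must be checked explicitly, and there the gain $\la k+m+n\ra^{-2s_0}\approx\la m\ra^{-2s_0}$ together with the phase lower bound renders the $m$-summation convergent under $a\leq 2\alpha-1$.
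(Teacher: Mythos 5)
Your proposal is correct and follows the same overall strategy as the paper's proof: use $\max_j|k_j|\gtrsim |k|$ to shift regularity onto a single factor, apply Cauchy--Schwarz so that one factor contributes $\|u\|_{H^s}$ and the other two contribute $\|u\|_{H^{s_0}}$, and reduce to a multiplier bound; the treatment of $NR_3$ via $\sup_k|u_k|\les\|u\|_{H^{s_0}}$ and of $NR_1,NR_2$ by summing out two frequencies at level $s_0$ also matches the paper. The one substantive difference is how much weight you transfer. You split $\la k\ra^{s+a}\les \la k\ra^{a}\bigl(\la k_1\ra^{s}+\la k_2\ra^{s}+\la k_3\ra^{s}\bigr)$, which leaves multipliers such as $\sup_k\la k\ra^{2a}\sum \la k+m+n\ra^{-2s_0}\la k+n\ra^{-2s_0}g(m,n,k)^{-2}$ that carry \emph{no} decay in the distinguished frequency; these are not dominated by \eqref{Bestimate} at any level and must be re-estimated from scratch. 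Your sketch of that verification does close (the region $|m|\ll|k|$ works because $\la k\ra^{4-4\alpha}$ from the phase is absorbed by $\sum_n \la k+n\ra^{-4s_0}n^{-2}\les\la k\ra^{-2}$, and the region $|m|\gg|k|$ uses $2s_0+4\alpha>3$), but you only assert it, and note that the three terms are \emph{not} all related by symmetry: $j=1$ and $j=3$ are exchanged by $m\leftrightarrow n$, but the $j=2$ term (decay on $\la k+m\ra$ and $\la k+n\ra$) is a genuinely different sum that needs its own check. The paper avoids all of this by transferring only the excess regularity, $\la k\ra^{s+a}=\la k\ra^{s_0+a}\la k\ra^{s-s_0}\les\la k\ra^{s_0+a}\sum_j\la k_j\ra^{s-s_0}$, after which Cauchy--Schwarz produces \emph{exactly} the multiplier \eqref{Bestimate} at level $s_0$, already proven bounded in Proposition~\ref{prop:main} (with the strict constraint $a<2s_0+\alpha-1$ vacuous because $2s_0+\alpha-1>2\alpha-1$ when $s_0>\frac12$), so no new computation is required. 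Your route is valid but trades a one-line citation for a fresh three-region case analysis; if you keep it, you must actually write out that analysis, including the $j=2$ case.
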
 
\begin{proof}  We describe the details for $B$ only. 
$$
\|B(u)\|_{H^{s+a}}^2\les \Big\|\sum_{k-k_1+k_2-k_3=0 \atop{|k_1-k| |k_2-k_1|\ges \la k\ra^{2-2\alpha}}} \frac{ \la k\ra^{s+a}|u_{k_1}| |u_{k_2}|  |u_{k_3}| }{|k^{2\alpha}-k_1^{2\alpha} +k_2^{2\alpha}-k_3^{2\alpha}|}  \Big\|_{\ell^2_k}^2.
$$
Since $\max(|k_1|,|k_2|,|k_3|)\gtrsim |k|$, we can bound the right hand side  by
$$
\sum_{j=1}^3 \Big\|\sum_{k-k_1+k_2-k_3=0 \atop{|k_1-k| |k_2-k_1|\ges \la k\ra^{2-2\alpha}}} \frac{ \la k\ra^{s_0+a}   \la k_j\ra^{s-s_0} |u_{k_1}| |u_{k_2}| |u_{k_3}| }{|k^{2\alpha}-k_1^{2\alpha} +k_2^{2\alpha}-k_3^{2\alpha}|}  \Big\|_{\ell^2_k}^2.
$$
After Cauchy Schwarz inequality, the sum above is majorized by 
$$
\|u\|_{H^s}^2 \|u\|_{H^{s_0}}^4 \sup_k \sum_{k-k_1+k_2-k_3=0 \atop{|k_1-k| |k_2-k_1|\ges \la k\ra^{2-2\alpha}}} \frac{ \la k\ra^{2s_0+2a} \la k_1\ra^{-2s_0} \la k_2\ra^{-2s_0} \la k_3\ra^{-2s_0}   }{|k^{2\alpha}-k_1^{2\alpha} +k_2^{2\alpha}-k_3^{2\alpha}|^2}.   
$$
Implementing  the smoothing bound we have in Proposition~\ref{prop:main} at the $ s_0$ level, we are done. 

The proofs for $R(u)$ and $NR_j(u)$  are similar. 
\end{proof}

\begin{prop}\label{prop:main2} Fix $\frac12<\alpha\leq 1.$ For any $\frac34-\frac{\alpha}2<s\leq  \frac12, $ , and $a\leq \min(2\alpha-1,2s+\alpha-1) $ (the inequality has to be strict if the minimum is  $2s+\alpha-1$), we have   
$$
\|NR_j(u)\|_{X^{s+a,-b}}\les \|u\|_{X^{s,b}}^5,\,\,\,\,j=1,2,
$$
provided that $b<\frac12$ is sufficiently close to $\frac12$.  
\end{prop}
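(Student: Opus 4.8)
The plan is to recognize each $NR_j$ as the bilinear smoothing operator underlying $B$ with one of its three legs occupied by the full cubic nonlinearity, and then to measure that leg in a Bourgain space rather than in $H^s$ (which is hopeless once $s\le\frac12$, since $\ell^2$ is not an algebra). Indeed, setting $v:=|u|^2u$ and $k'=k-k_1+k_2$, the constraint $k-k_1+k_2-k_3+k_4-k_5=0$ collapses the last three factors of $NR_1$ into $\widehat v(k')$, so that
$$\widehat{NR_1(u)}(k)=2\sum_{k-k_1+k_2=k' \atop |k_1-k||k_2-k_1|\ges\la k\ra^{2-2\alpha}}\frac{u_{k_1}\overline{u_{k_2}}\,\widehat v(k')}{k^{2\alpha}-k_1^{2\alpha}+k_2^{2\alpha}-(k')^{2\alpha}},$$
and $NR_2$ has the same form with $v$ in the conjugated (middle) leg and the denominator depending on $(k,k_1,k_3)$. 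In either case the denominator is exactly the cubic resonance that is $\ges1$ on the support by Lemma~\ref{freq_est}, which is the whole reason the normal form was carried out in this region.

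First I would prove a trilinear estimate $\|\mathcal B[f,g,w]\|_{X^{s+a,-b}}\les\|f\|_{X^{s,b}}\|g\|_{X^{s,b}}\|w\|_{X^{s,-b}}$ for the operator $\mathcal B$ displayed above, and then combine it with the fractional cubic estimate $\||u|^2u\|_{X^{s,-b}}\les\|u\|_{X^{s,b}}^3$ coming from the local theory of \cite{det,kwon} (available since $s>\frac34-\frac\alpha2>s_c$, for $b<\frac12$ close to $\frac12$). The role of the denominator in $\mathcal B$ is identical to its role in Proposition~\ref{prop:main}: after passing to absolute values and renaming $k_1=k+m$, $k_2=k+m+n$, Lemma~\ref{freq_est} converts $1/|\Omega_3|$ into the decay $(|m|+|n|+|k|)^{2-2\alpha}/(|m||n|)$, and the resulting frequency multiplier over the two free variables $m,n$ is precisely the one handled in Cases~1--3 of Proposition~\ref{prop:main}. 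This is what produces the admissible range $a\le\min(2\alpha-1,2s+\alpha-1)$ with the stated strictness, and, importantly, this portion already works for $s\le\frac12$.

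The genuine difficulty lives in the interplay of the third leg with the modulation weights. The space-time frequencies obey the identity $\sigma_0-\sigma_1+\sigma_2-\sigma'=-\Omega_3$, where $\sigma_0,\sigma_1,\sigma_2,\sigma'$ are the modulations of the output and of $f,g,w$; since $|\Omega_3|\ges1$, this forces the largest modulation to dominate $|\Omega_3|$. I would split into cases according to which modulation is largest and use $\la\sigma'\ra\les\la\sigma_0\ra+\la\sigma_1\ra+\la\sigma_2\ra+|\Omega_3|$ to redistribute the weight $\la\sigma'\ra^{b}$ carried by the third leg onto the remaining modulation weights and onto a harmless factor $|\Omega_3|^{b}$, which costs only an arbitrarily small amount of the smoothing gain (hence the strict inequality and the need for $b$ near $\frac12$). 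The $\tau$-integrations would then be carried out with the integral form of Lemma~\ref{lem:sums}.

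The main obstacle is exactly here: because $b<\frac12$, Lemma~\ref{lem:sums} cannot integrate a product of only two modulation weights $\la\sigma_i\ra^{-b}$, since one needs the exponents to sum to more than $1$. I therefore expect to reserve one factor in $L^2_{t,x}$ via Cauchy--Schwarz and close the remaining integrations with a fractional $L^4_{t,x}$ Strichartz embedding $X^{0+,b}\hookrightarrow L^4_{t,x}$ for $e^{it(-\Delta)^\alpha}$ valid for $b<\frac12$ — this is the mechanism that replaces the embedding $\ell^2\hookrightarrow\ell^\infty$ used for $s>\frac12$ in Proposition~\ref{prop:main}, and it is the source of the $\epsilon$-loss absorbed by the strictness of $a$. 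Confirming that such an estimate (or an equivalent direct three-fold modulation integration) is available for $\alpha\in(\frac12,1)$ and that it does not consume the gain extracted from $\Omega_3$ is the delicate point on which the proposition turns.
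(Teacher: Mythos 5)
Your reduction of $NR_j$ to a trilinear operator with one leg occupied by $|u|^2u$ is structurally correct, and your treatment of the pure frequency multiplier (Lemma~\ref{freq_est} plus the reduction to \eqref{Bestimate}) matches the paper. But the core of the matter --- how the inner three factors and their modulations are summed when $s\le\frac12$ --- is exactly where your proposal has a genuine gap, and it is not the route the paper takes. First, the trilinear estimate $\|\mathcal B[f,g,w]\|_{X^{s+a,-b}}\les\|f\|_{X^{s,b}}\|g\|_{X^{s,b}}\|w\|_{X^{s,-b}}$ cannot hold with the full gain $a\le 2\alpha-1$: in the regime where $\sigma_0,\sigma_1,\sigma_2=O(1)$ and $\sigma'\approx\Omega_3$, the weight $\la\sigma'\ra^{b}\approx|\Omega_3|^{1/2-}$ must be absorbed by the resonance denominator, leaving only $|\Omega_3|^{1/2+}$ available for smoothing; since the bound $a\le2\alpha-1$ in Case 2 of Proposition~\ref{prop:main} uses the full power $g(m,n,k)^{2}$ with no room to spare (e.g.\ $|m|\approx|k|$, $|n|\approx1$ gives $g\approx|k|^{2\alpha-1}$), halving the power of $g$ roughly halves the achievable gain to $\alpha-\frac12$. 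Your assertion that the factor $|\Omega_3|^{b}$ is ``harmless'' and costs ``only an arbitrarily small amount'' of the gain is therefore quantitatively wrong. Second, you yourself flag the divergence of the $\tau_1,\tau_2$ integrations (only two weights $\la\sigma_i\ra^{-2b}$ with $b<\frac12$) as the delicate point and defer it to a fractional $L^4_{t,x}$ Strichartz embedding whose availability and cost you do not verify; for $\alpha<1$ on the torus such embeddings carry a derivative loss, and nothing in the sketch shows this loss is compatible with the stated range of $a$.

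The paper avoids both problems by never splitting the quintic form into a trilinear operator composed with the cubic nonlinearity. After a single Cauchy--Schwarz it retains the full six-frequency resonance weight $\la k^{2\alpha}-(k+m)^{2\alpha}+(k+m+n)^{2\alpha}-k_3^{2\alpha}+k_4^{2\alpha}-k_5^{2\alpha}\ra^{-(1-)}$ and proves dedicated counting lemmas (Lemma~\ref{lem:231} for $\alpha=1$, Lemma~\ref{lem:23alpha} for $\frac12<\alpha<1$) showing that the sum over $k_3-k_4+k_5=k+n$ of $\la k_3\ra^{-2s}\la k_4\ra^{-2s}\la k_5\ra^{-2s}$ against this weight is $\les\la k+n\ra^{-2s}$, uniformly in the parameter $A$ carrying the outer frequencies. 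This performs the inner summation at regularity $s\le\frac12$ while leaving the full $g(m,n,k)^{2}$ intact for the smoothing gain, and it is precisely these lemmas (Cases 3 and 5 of Lemma~\ref{lem:23alpha}) that generate the threshold $s>\frac34-\frac\alpha2$ --- a threshold your plan does not account for, since your ingredients would only require $s>s_c=\frac{1-\alpha}2$. To close the argument you would need to prove analogues of these counting lemmas (or otherwise decouple the inner modulations without sacrificing part of $g^{2}$); as written the proposal does not do this.
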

Before the proof of Proposition~\ref{prop:main2} we record the following two lemmas.
  
\begin{lemma} \label{lem:231}For $ \frac14<s\leq \frac12$, we have
$$
\sum_{k=k_1-k_2+k_3} \frac{\la k_1\ra^{-2s}\la k_2\ra^{-2s} \la k_3\ra^{-2s}  }{ \la A -k_1^2+k_2^2-k_3^2\ra^{1-} } \les \la k\ra^{-2s}
$$
where the implicit constant is independent of $k$ and $A.$
\end{lemma}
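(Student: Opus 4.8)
The plan is to first linearize the modulation using the resonance relation. Since the summation is constrained by $k=k_1-k_2+k_3$, I would substitute $k_1=k-m$, $k_3=k-n$, so that $k_2=k-m-n$, and use the elementary identity $k^2-k_1^2+k_2^2-k_3^2=2(k-k_1)(k-k_3)=2mn$. Writing $B=A-k^2$ (an arbitrary real number as $A$ ranges over $\R$), the left hand side becomes
\[
\sum_{m,n\in\Z}\frac{\la k-m\ra^{-2s}\,\la k-m-n\ra^{-2s}\,\la k-n\ra^{-2s}}{\la B+2mn\ra^{1-}},
\]
and the goal is to bound this by $\la k\ra^{-2s}$ uniformly in $k$ and $B$. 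I would record at the outset the two structural facts that drive everything: at least one of $|k_1|,|k_2|,|k_3|=|k-m|,|k-m-n|,|k-n|$ is $\ges|k|$, and the full triple weight is summable off resonance because $4s>1$ (this is where $s>\frac14$ enters).

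Next I would fix $m$ and sum in $n$, treating $m=0$ separately. For $m=0$ the phase is the constant $\la B\ra^{1-}\ge1$ while the numerator is $\la k\ra^{-2s}\la k-n\ra^{-4s}$, so summing in $n$ gives $\la k\ra^{-2s}$ directly from $4s>1$. For $m\neq0$ I would factor out $\la k-m\ra^{-2s}$ and write $B+2mn=2m(n-n_*)$ with $n_*=-B/(2m)$, splitting the inner $n$-sum into the non-resonant range $|n-n_*|\ge1$ and the resonant range $|n-n_*|<1$. On the non-resonant range one has $\la B+2mn\ra\ges|m|\,\la n-n_*\ra$, which yields a decisive gain of $|m|^{-(1-)}$. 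Since the two surviving weights are centered at $k$ and $k-m$ with separation $m$, their product is $\les\la m\ra^{-2s}\big(\la k-n\ra^{-2s}+\la (k-m)-n\ra^{-2s}\big)$, and Lemma~\ref{lem:sums} bounds each resulting $n$-sum by $O(1)$; the inner contribution is thus $\les|m|^{-(1-)}\la m\ra^{-2s}$. A second application of Lemma~\ref{lem:sums} to $\sum_m \la k-m\ra^{-2s}|m|^{-(1-)}\la m\ra^{-2s}$, in which the total exponent on the $m$-weight exceeds $1$, produces exactly $\la k\ra^{-2s}$, uniformly in $B$. Which weight plays the role of $\beta$ in Lemma~\ref{lem:sums} depends on whether $s<\frac12$ or $s=\frac12$; at the endpoint the mismatch costs only a logarithm, which is harmless.

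The resonant range is the crux and the only genuinely delicate point. There are at most two integers $n_0$ with $|n_0-n_*|<1$, so this contribution is a sum over the resonance hyperbola $mn\approx-B/2$, on which the denominator gives no help. The temptation to extract a single factor $\la k\ra^{-2s}$ and bound the remaining two-weight sum must be resisted: that sum in fact diverges, since its large-$|m|$ tail is not summable for $s\le\frac12$, and a divisor-counting estimate for the hyperbola lattice points would carry an unacceptable $|B|^{\eps}$ loss. Instead I would keep all three weights and exploit that
\[
\la k-m\ra^{-2s}\,\la k-n_0\ra^{-2s}\,\la k-m-n_0\ra^{-2s}
\]
is symmetric in $m\leftrightarrow n_0$. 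Splitting according to whether $|n_0|\le|k|/10$ or $|n_0|>|k|/10$, in the first case $\la k-n_0\ra^{-2s}\les\la k\ra^{-2s}$ and the remaining sum satisfies $\sum_m\la k-m\ra^{-2s}\la k-m-n_0\ra^{-2s}\les\sum_m\la k-m\ra^{-4s}\les1$ by $4s>1$, with the third weight, rather than any lattice-point count, supplying both the convergence and the uniformity in $B$; the second case follows by symmetry, and one checks the transitional region $|k-m|\lesssim|n_0|$ separately, again using $4s>1$. Combining the diagonal, non-resonant and resonant contributions gives the claimed bound, and I expect the resonant analysis together with tracking uniformity in $A$ to be the main obstacle.
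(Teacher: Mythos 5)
Your reduction to $\sum_{m,n}\la k-m\ra^{-2s}\la k-m-n\ra^{-2s}\la k-n\ra^{-2s}\la B+2mn\ra^{-(1-)}$, your non-resonant analysis via Lemma~\ref{lem:sums}, and your identification of the resonant hyperbola as the crux all track the paper's proof; but the resonant case --- precisely the step you flag as delicate --- is where your argument does not close. Two concrete problems. First, your resonant band $|n-n_*|<1$, i.e.\ $|B+2mn|<2|m|$, contains at most two points per fixed $m$ but can contain an entire ray of $m$'s for a fixed $n_0$ (for instance $n_0=0$ is resonant for every $|m|>|B|/2$), so the set has no $O(1)$-per-column structure, and your claim that the case $|n_0|>|k|/10$ ``follows by symmetry'' fails: the three weights are symmetric under $m\leftrightarrow n_0$ but the resonant set is not. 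Second, the step $\sum_m\la k-m\ra^{-2s}\la k-m-n_0\ra^{-2s}\les\sum_m\la k-m\ra^{-4s}$ is not a valid inequality, because on the resonant set $n_0=n_0(m)\approx -B/(2m)$ moves with $m$ and there is no pointwise domination $\la k-m-n_0(m)\ra\ges\la k-m\ra$. The dangerous configurations are real: for $B<0$ the function $m\mapsto k-m+B/(2m)$, whose size is what $\la k-m-n_0(m)\ra$ measures, has a degenerate critical point near $|m|=\sqrt{|B|/2}$, and when $B\approx -k^2/2$ roughly $|k|^{1/2}$ consecutive values of $m$ near $k/2$ give $\la k-m-n_0(m)\ra=O(1)$ while $\la k-m\ra\approx |k|$; the resulting contribution is of size $|k|^{1/2-4s}$, admissible only because $s>\frac14$ and invisible to a comparison with $\sum_m\la k-m\ra^{-4s}$. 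So either you carry out this level-set analysis of $k-m+B/(2m)$ (including the degenerate critical point), or the resonant step is a gap.

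The paper sidesteps all of this with a different resonant threshold. After symmetrizing to $|m|\geq|n|$ and discarding $n=0$, it takes the resonant region to be $|A+2mn|\les|n|$ rather than $\les|m|$. That condition reads $|n+A/(2m)|\les |n|/(2|m|)\leq\frac12$ for fixed $m$ and $|m+A/(2n)|\les\frac12$ for fixed $n$, so the resonant set has $O(1)$ points in every row \emph{and} every column. One then pulls out $\la k\ra^{-2s}$ from whichever of the three weights is $\ges|k|$ and applies Cauchy--Schwarz over the resonant set to the remaining two; each square-sum collapses to a one-variable sum of $\la\cdot\ra^{-4s}$ and is $O(1)$ by $4s>1$, uniformly in $A$. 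On the complement $|A+2mn|\gg|n|$ the denominator is $\ges\la n\ra^{1-}$ and Lemma~\ref{lem:sums} finishes, much as in your non-resonant case. I would adopt that choice of band rather than try to repair the argument for yours.
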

\begin{proof}
Renaming the variables $k_1=k+m$, $k_2=k+m+n$, $k_3=k+n$, and replacing $A$ with $A+k^2$, and by symmetry, it suffices to consider 
$$
\sum_{|m|\geq |n|} \frac{\la k+m\ra^{-2s}\la k+m+n\ra^{-2s} \la k+n\ra^{-2s}  }{ \la A + 2mn\ra^{1-} }. 
$$
Noting that the term $n=0$ is $\les \la k\ra^{-2s}$,  we  reduce the above sum to 
$$
\sum_{|m|\geq |n|>0} \frac{\la k+m\ra^{-2s}\la k+m+n\ra^{-2s} \la k+n\ra^{-2s}  }{ \la A + 2mn\ra^{1-} }, 
$$
and treat this sum in two distinct cases. \\ 
Case 1. $|A+2mn|\les |n|$. We have
$$
\sum_{|m|\geq |n|>0, \,|A+2mn|\les |n|}  \la k+m\ra^{-2s}\la k+m+n\ra^{-2s} \la k+n\ra^{-2s}. 
$$
Note that  $\max( |k+m+n|,|k+n|,|k+m|)\gtrsim |k|$. In the case $|k+m+n|\gtrsim |k|$, by Cauchy Schwarz inequality, we have
$$
\la k\ra^{-2s} \sqrt{\sum_{|m|\geq |n|>0, \,|A+2mn|\les |n|}  \la k+m\ra^{-4s}} \sqrt{ \sum_{|m|\geq |n|>0, \,|A+2mn|\les |n|}  \la k+n\ra^{-4s}} \les \la k\ra^{-2s}. 
$$
The last inequality follows by observing that for fixed $m$ there are only finitely many $n$ in the sum with the property $|A+2mn|\les |n|$, and vice versa. The other cases are similar. \\
Case 2. $|A+2mn|\gg |n|$. We have 
$$
\sum_{|m|\geq |n|>0} \frac{\la k+m\ra^{-2s}\la k+m+n\ra^{-2s} \la k+n\ra^{-2s}  }{  \la n\ra^{1-} } 
\les \sum_n \frac{  \la k+n\ra^{-2s}  }{  \la n\ra^{4s-} } \les \la k\ra^{-2s}. 
$$
This finishes the proof of the lemma.
\end{proof}
\begin{lemma} \label{lem:23alpha}Fix $\frac12<\alpha<1$. For $ \frac34-\frac\alpha2<s\leq \frac12$, we have
$$
\sum_{k=k_1-k_2+k_3} \frac{\la k_1\ra^{-2s}\la k_2\ra^{-2s} \la k_3\ra^{-2s}  }{ \la A \pm k_1^{2\alpha}\pm k_2^{2\alpha} \pm k_3^{2\alpha} \ra^{1-} } \les \la k\ra^{-2s}
$$
where the implicit constant is independent of $k, A$, and the combinations of $\pm$ signs.
\end{lemma}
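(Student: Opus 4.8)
The plan is to follow the proof of Lemma~\ref{lem:231}, replacing the exact quadratic resonance $A-k_1^2+k_2^2-k_3^2=A+2mn$ by its fractional analogue and extracting the needed nondegeneracy from Lemma~\ref{freq_est}. First I would make the same substitution $k_1=k+m$, $k_2=k+m+n$, $k_3=k+n$ (so that $k=k_1-k_2+k_3$ holds automatically), reduce to $|m|\ge|n|$ by the symmetry $k_1\leftrightarrow k_3$ (which also swaps two of the outer signs, harmless since the claim is uniform over all sign choices), and dispose of $n=0$ at once: there the two surviving weights are $\la k+m\ra^{-4s}\la k\ra^{-2s}$, and $\sum_m\la k+m\ra^{-4s}\les 1$ because $4s>1$ throughout our range $s>\frac34-\frac\alpha2\ (\ge\frac14)$.

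For $n\neq 0$ I would sum one variable at a time. Writing the signs as $\epsilon_1,\epsilon_2,\epsilon_3\in\{\pm1\}$ and summing first in $m$ (absorbing the $m$-independent term $\epsilon_3|k+n|^{2\alpha}$ into $A$), the relevant phase is $\Phi(m)=\epsilon_1|k+m|^{2\alpha}+\epsilon_2|k+m+n|^{2\alpha}$. The crucial observation is that when $\epsilon_1\neq\epsilon_2$ the increment $\Phi(m)-\Phi(m-1)$ is exactly the alternating four-term expression of Lemma~\ref{freq_est} evaluated at unit step in the first slot, hence $\ges|n|(|k|+|m|+|n|)^{2\alpha-2}$; this is the analogue of the constant increment $2|n|$ in Lemma~\ref{lem:231}. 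For the alternating patterns $\epsilon_1=\epsilon_3\neq\epsilon_2$ this monotonicity holds in both $m$ and $n$, and Lemma~\ref{lem:231}'s two-case argument goes through verbatim: on the band $|A+\Phi|\les|n|(|k|+|m|+|n|)^{2\alpha-2}$ there are $O(1)$ admissible $m$ for each $n$ and vice versa, so Cauchy--Schwarz together with $\max(\la k_1\ra,\la k_2\ra,\la k_3\ra)\ges\la k\ra$ produces $\la k\ra^{-2s}$; off the band one gains the denominator and sums with Lemma~\ref{lem:sums}, splitting into the regimes $|m|\gg|k|$, $|m|\approx|k|$, $|m|\ll|k|$ as in Proposition~\ref{prop:main} to carry the weight $(|k|+|m|+|n|)^{2\alpha-2}$. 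These one-dimensional sums close precisely when $4s>3-2\alpha$, i.e. $s>\frac34-\frac\alpha2$, which reduces to the condition $4s>1$ of Lemma~\ref{lem:231} at $\alpha=1$.

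The remaining sign patterns have a coinciding pair, and hence an interior stationary point of $\Phi$ in one variable (near $m\approx-k-\frac n2$ when $\epsilon_1=\epsilon_2$, symmetrically in $n$ when $\epsilon_2=\epsilon_3$); for the all-equal patterns $\epsilon_1=\epsilon_2=\epsilon_3$ the phase is stationary in both. This degeneracy, absent for the alternating NLS pattern treated in Lemma~\ref{lem:231} and present only for $\alpha<1$, is what I expect to be the main obstacle, since there the first-difference bound of Lemma~\ref{freq_est} no longer applies. Near such a stationary point I would count near-resonant frequencies through the second difference, which is $\sim|n|^{2\alpha-2}$ there (a van der Corput / stationary-phase scale); more uniformly, the mixed second difference $\partial_m\partial_n\Phi=\epsilon_2\,2\alpha(2\alpha-1)|k+m+n|^{2\alpha-2}$ is nondegenerate for every sign pattern, which I would exploit for a two-dimensional lattice-point-near-a-level-set count in the all-equal case. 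The bookkeeping here is heavier than in Lemma~\ref{lem:231}, but the stationary contribution is lower order and the estimate again closes for $s>\frac34-\frac\alpha2$ — exactly the range required by Proposition~\ref{prop:main2}.
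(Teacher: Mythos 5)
Your overall strategy (nondegenerate phase increments away from stationary configurations, a separate count near them) is the right one, but your execution differs from the paper's and has a concrete gap. The paper does not pass to $(m,n)$ coordinates here: it flips $k_2\to -k_2$, orders $|k_3|\geq|k_2|\geq|k_1|$, and runs a five-case size analysis, bounding $|\partial f/\partial k_j|$ from below by the worst-case quantity $\big||k_i|^{2\alpha-1}-|k_j|^{2\alpha-1}\big|$ (valid for \emph{every} sign pattern simultaneously) and using the mean value theorem plus a dyadic decomposition around the one-dimensional stationary sets. In particular no two-dimensional Hessian/lattice-point count is ever needed, even for the all-equal sign pattern; your proposed mixed-second-difference argument is extra machinery, and it is precisely the part you leave unsubstantiated ("the bookkeeping here is heavier... but the estimate again closes"). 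Note that in the paper the threshold $s>\frac34-\frac\alpha2$ arises exactly from those stationary/comparable-frequency cases (its Cases 3 and 5), i.e.\ from the part of the argument you defer, not from the alternating-pattern sums you work out.

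The genuine gap is in your alternating case. From Lemma~\ref{freq_est} the unit increment of $\Phi$ in $m$ is $\ges |n|\,(1+|n|+|k+m|)^{2\alpha-2}$, with the \emph{actual} frequency $|k+m|$ in the base; you weaken this to $|n|(|k|+|m|+|n|)^{2\alpha-2}$. This loss is fatal in the region $m\approx -k$ (so $|k_1|=|k+m|$ and $|k_2|=|k+m+n|$ both $\ll|k|$ while $|k_3|\approx|k|$): there your band width and your off-band denominator gain are both $\approx |n|\,|k|^{2\alpha-2}\les 1$ once $|n|\les |k|^{2-2\alpha}$, so Case 2 of your dichotomy yields no gain at all, and the remaining unweighted sum over that region is $\ges \la k\ra^{-2s}\sum_{0<|n|\les |k|^{2-2\alpha}}\la n\ra^{1-4s}\approx \la k\ra^{-2s}|k|^{(2-2\alpha)(2-4s)}$, which is not $\les\la k\ra^{-2s}$ for $s<\frac12$ and $\alpha<1$. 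The estimate is nevertheless true there because the phase varies by $\approx|k|^{2\alpha-1}\gg1$ per unit step in $n$ (one frequency in the pair is $\approx|k|$, the other is small) --- this is exactly what the paper exploits in its Cases 4 and 5 via $|\partial f/\partial k_1|,|\partial f/\partial k_2|\ges|k|^{2\alpha-1}$. So the repair is to keep the frequency-dependent form of the increment bounds and choose the summation variable according to which pair of frequencies is separated in size, rather than fixing a single band width of the form $|n|(|k|+|m|+|n|)^{2\alpha-2}$.
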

\begin{proof}Replacing $k_2$ with $-k_2$ we need to prove that 
\be\label{alpha2s}
\sum_{k=k_1+k_2+k_3} \frac{\la k_1\ra^{-2s}\la k_2\ra^{-2s} \la k_3\ra^{-2s}  }{ \la A \pm k_1^{2\alpha}\pm k_2^{2\alpha} \pm k_3^{2\alpha} \ra^{1-} } \les \la k\ra^{-2s}.
\ee
By symmetry we can  assume that $|k_3|\geq |k_2|\geq |k_1|$. We have the following regions\\
1. $|k_3|\approx |k_2|\gg |k| \gg |k_1|$,\\
2.  $|k_3|\approx |k_2|\gg |k_1| \gtrsim \la k\ra $,\\
3.  $|k_3|\approx |k_2|\approx  |k_1| \gg \la k\ra$,\\
4.  $|k| \approx |k_3|\gg |k_2|\geq |k_1|$,\\
5.  $|k|\approx |k_3|\approx |k_2|\geq   |k_1|$.\\
Case 1. $|k_3|\approx |k_2|\gg |k| \gg |k_1| $.  We have
$$
\eqref{alpha2s}\les \sum_{|k_3| \gg |k| \gg |k_1| } \frac{\la k_1\ra^{-2s}  \la k_3\ra^{-4s}  }{ \la f(k_1,k_3) \ra^{1-} } \les \sum_{\ell=3}^\infty 2^{-4\ell s} | k |^{-4s} \sum_{ |k_1|\ll |k|} \la k_1\ra^{-2s} \sum_{|k_3| \approx    2^\ell |k| } \frac{ 1  }{ \la f(k_1,k_3) \ra^{1-} } , 
$$
where $f(k_1,k_3)= A \pm k_1^{2\alpha}\pm (k-k_1-k_3)^{2\alpha} \pm k_3^{2\alpha} $.
Note that,  for fixed $\ell $ and $k_1$,
$$\Big|\frac{\partial f}{\partial k_3}\Big| \gtrsim | |k-k_1-k_3|^{2\alpha-1} -  |k_3|^{2\alpha-1}| \approx   |k|   |k_3|^{2\alpha-2}\approx |k|^{2\alpha-1} 2^{\ell(2\alpha-2)}.$$
Therefore by the mean value theorem, there is a $k_0$ depending on $A, k, k_1, \ell$ and $\pm$ signs so that the sum in $k_3$ is bounded by 
$$
\sum_{|k_3| \approx    2^\ell |k| } \frac1{ \la |k_3-k_0| |k|^{2\alpha-1} 2^{\ell(2\alpha-2)} \ra^{1-}} \les 1+ |k|^{1-2\alpha+} 2^{\ell(2-2\alpha)+}.
$$
Also summing in $k_1$ we have the bound
$$
\eqref{alpha2s}  \les \sum_{\ell=3}^\infty 2^{-4\ell s}| k |^{1-6s+} (1+ |k|^{1-2\alpha+} 2^{\ell(2-2\alpha)+}) \les  | k |^{1-6s+}+ |k|^{2-6s-2\alpha+}\les |k|^{-2s}. 
$$
Case 2. $|k_3|\approx |k_2|\gg |k_1| \gtrsim \la k\ra $. We have
$$
\eqref{alpha2s}\les \sum_{|k_3| \gg |k_1| \gtrsim \la k\ra  } \frac{\la k\ra^{-2s}  \la k_3\ra^{-4s}  }{ \la f(k_1,k_3) \ra^{1-} },
$$
where $f$ is as above. Note that for fixed $k_3$
$$\Big|\frac{\partial f}{\partial k_1}\Big| \gtrsim \big|  |k-k_1-k_3|^{2\alpha-1} -  |k_1|^{2\alpha-1}\big| \approx     |k_3|^{2\alpha-1}.$$
Therefore, for some $k_0$ depending $A, k, k_3, \ell$ and $\pm$ signs, we have 
$$
\eqref{alpha2s}\les \sum_{|k_3| \gg   \la k\ra  }  \sum_{|k_1|\ll |k_3|} \frac{\la k\ra^{-2s}  \la k_3\ra^{-4s}  }{ \la  |k_1-k_0| |k_3|^{2\alpha-1} \ra^{1-} } \les \sum_{|k_3| \gg  \la k\ra  } \la k\ra^{-2s}  \la k_3\ra^{-4s}  (1+ |k_3|^{1-2\alpha+}) \les \la k\ra^{-2s}.
$$ 
Case 3. $|k_3|\approx |k_2|\approx  |k_1| \gg\la k\ra$. We have
$$
\eqref{alpha2s}\les \sum_{|k_3| \gg \la k\ra  }  |k_3|^{-6s}  \sum_{|k_1|\approx |k_3|} \frac1{ \la f(k_1,k_3) \ra^{1-} },
$$
where 
$$\Big|\frac{\partial f}{\partial k_1}\Big| \gtrsim \big|  |k-k_1-k_3|^{2\alpha-1} -  |k_1|^{2\alpha-1}\big|.$$
Note that in this region the derivative is zero only when $k_1=\frac{k-k_3}2$.  Since the contribution of this case can be bounded by $\la k\ra^{1-6s}\les \la k\ra^{-2s}$, it suffices to estimate 
$$
 \sum_{|k_3| \gg  \la k\ra  }  |k_3|^{-6s}  \sum_{\ell=1}^{\log|k_3|} \sum_{\big|k_1-\frac{k_3-k}2 \big| \approx 2^\ell,} \frac1{ \la f(k_1,k_3) \ra^{1-} }.
$$
Writing $j=k_1-\frac{k_3-k}2$, we have 
$$
\Big|\frac{\partial f}{\partial k_1}\Big| \gtrsim |j| |k_3|^{ 2\alpha-2}\approx  2^\ell  |k_3|^{ 2\alpha-2}.
$$
This yields (for some $j_0$ depending on $k_3$)
\begin{multline*}
\eqref{alpha2s}\les  \sum_{|k_3| \gg \la k\ra }  |k_3|^{-6s}  \sum_{\ell=1}^{\log|k_3|} \sum_{ |j| \approx 2^\ell,} \frac1{ \la |j-j_0| 2^\ell |k_3|^{ 2\alpha-2} \ra^{1-} } \\
\les \sum_{|k_3| \gg \la k\ra }  |k_3|^{-6s+} + \sum_{|k_3| \gg \la k\ra  }  |k_3|^{-6s+2-2\alpha+}\les \la k\ra^{1-6s+} +\la k\ra^{-6s+3-2\alpha +} \les  \la k\ra^{-2s},
\end{multline*}
provided that $s> \frac34-\frac\alpha2$.\\
Case 4.  $|k| \approx |k_3|\gg |k_2|\geq |k_1| $. It suffices to prove that
\be\label{case4}
 \sum_{|k|  \gg |k_2|\geq |k_1| } \frac{\la k_1\ra^{-2s}  \la k_2\ra^{-2s}  }{ \la f(k_1,k_2) \ra^{1-} } \les 1, 
\ee
where $f(k_1,k_2)= A \pm k_1^{2\alpha}\pm k_2^{2\alpha} \pm (k-k_1-k_2)^{2\alpha} $.
Note that
$$
\Big|\frac{\partial f}{\partial k_1}\Big| , \Big|\frac{\partial f}{\partial k_2}\Big| \gtrsim   |k |^{ 2\alpha-1}.
$$
Therefore 
$$
\eqref{case4}\les \sum_{|k|  \gg  |k_1| } \la k_1\ra^{-4s}  \sum_{|k|  \gg |k_2| }  \frac{  1 }{ \la |k_2-k_0| |k |^{ 2\alpha-1}\ra^{1-} }
\les \sum_{|k|  \gg  |k_1| } \la k_1\ra^{-4s} (1+|k|^{1- 2\alpha +})\les 1.
$$
Here, $k_0$ depends on $k_1$ as above.\\
Case 5.  $|k|\approx |k_3|\approx |k_2|\geq   |k_1|  $. We need to prove that
$$
|k|^{-2s} \sum_{|k|\gtrsim |k_1|}  \la k_1\ra^{-2s}   \sum_{|k_3|  \approx |k|} \frac{1 }{ \la f(k_1,k_3) \ra^{1-} } \les 1,
$$
where $f$ satisfies
$$\Big|\frac{\partial f}{\partial k_3}\Big| \gtrsim \big|  |k-k_1-k_3|^{2\alpha-1} -  |k_3|^{2\alpha-1}  \big|.$$
Note that the derivative is zero when $k_3=\frac{k-k_1}2$ or when $k_1=k$.  Since the contribution of these cases can be bounded by $|k|^{1-4s+}\les 1$, it suffices to estimate 
\be\label{case5}
 |k|^{-2s} \sum_{|k|\gtrsim |k_1|, k_1\neq k}  \la k_1\ra^{-2s}  \sum_{\ell=1}^{\log|k|} \sum_{\big|k_3-\frac{k -k_1}2 \big| \approx 2^\ell,} \frac1{ \la f(k_1,k_3) \ra^{1-} }.
\ee
Writing $j=k_3-\frac{k -k_1}2$, we have 
$$
\Big|\frac{\partial f}{\partial k_3}\Big| \gtrsim \min(2^\ell, |k-k_1|)  |k|^{ 2\alpha-2}.
$$
This yields (for some $j_0$ depending on $k_1$)
\begin{multline*}
\eqref{case5}\les  |k|^{-2s} \sum_{|k|\gtrsim |k_1|, k_1\neq k}  \la k_1\ra^{-2s}  \sum_{\ell=1}^{\log|k|} \sum_{|j| \approx 2^\ell } \frac1{ \la |j-j_0|\min(2^\ell, |k-k_1|)  |k|^{ 2\alpha-2} \ra^{1-} }  \\
\les |k|^{1-4s+}+ |k|^{-2s+2-2\alpha+} 
\sum_{|k|\gtrsim |k_1|, k_1\neq k}  \la k_1\ra^{-2s}  \sum_{\ell=1}^{\log|k|}   \frac1{ \min(2^\ell, |k-k_1|)^{1-}} \\ 
\les |k|^{1-4s+}+ |k|^{-4s+3-2\alpha+}\les 1,
\end{multline*}
provided that $s> \frac34-\frac\alpha2$.
\end{proof}

\begin{proof}[Proof of Proposition~\ref{prop:main2}] Recall that
$$
\widehat{NR_1(u)}(k)=2\sum_{k-k_1+k_2-k_3+k_4-k_5=0 \atop{|k_1-k| |k_2-k_1|\ges \la k\ra^{2-2\alpha}}} \frac{u_{k_1}\overline{u_{k_2}}u_{k_3}\overline{u_{k_4}}u_{k_5}}{k^{2\alpha}-k_1^{2\alpha} +k_2^{2\alpha}-(k-k_1+k_2)^{2\alpha}} .
$$
By the definition of $X^{s,b}$ norms and the Cauchy Schwarz inequality as  in \cite{det}, the multiplier we need to bound is
$$
\sup_k \sum_{k+n-k_3+k_4-k_5=0 \atop{|m| |n|\ges \la k\ra^{2-2\alpha}}} \frac{\la k \ra^{2s+2a} \la k+m\ra^{-2s} \la k+m+n\ra^{-2s} \la k_3\ra^{-2s}\la k_4\ra^{-2s}\la k_5\ra^{-2s} }{g(m,n,k)^2   \la k^{2\alpha}  -(k+m)^{2\alpha} +(k+m+n)^{2\alpha} -k_3^{2\alpha} +k_4^{2\alpha} -k_5^{2\alpha}\ra^{1-} }.
$$
Fix $k,m,n$. Using Lemma~\ref{lem:231} when $\alpha=1$ and    Lemma~\ref{lem:23alpha} when $\frac12<\alpha<1$ with $k=k+n$ and $A=  k^{2\alpha}  -(k+m)^{2\alpha} +(k+m+n)^{2\alpha}$, we bound the supremum above by
$$
\sup_k \sum_{ |m| |n|\ges \la k\ra^{2-2\alpha} } \frac{\la k \ra^{2s+2a} \la k+m\ra^{-2s} \la k+m+n\ra^{-2s} \la k+n\ra^{-2s}  }{g(m,n,k)^2 }.
$$
This sum is identical to \eqref{Bestimate} which was handled in Proposition~\ref{prop:main}.

Similarly, for $NR_2(u)$, the multiplier we need to bound is
$$
\sup_k \sum_{k_2-(k+m+n) +k_4-k_5=0 \atop{|m| |n|\ges \la k\ra^{2-2\alpha}}} \frac{\la k \ra^{2s+2a} \la k+m\ra^{-2s} \la k +n\ra^{-2s} \la k_2\ra^{-2s}\la k_4\ra^{-2s}\la k_5\ra^{-2s} }{g(m,n,k)^2   \la k^{2\alpha}  -(k+m)^{2\alpha} +k_2^{2\alpha}-(k +n)^{2\alpha}   +k_4^{2\alpha} -k_5^{2\alpha}\ra^{1-} }.
$$
This can also be bounded by \eqref{Bestimate} using  Lemma~\ref{lem:231} when $\alpha=1$ and   Lemma~\ref{lem:23alpha} when $\frac12<\alpha<1$ with $k=k+m+n$ and $A=  k^{2\alpha}  -(k+m)^{2\alpha} -(k+n)^{2\alpha}$.
\end{proof}

We are now ready to prove the smoothing theorems  and the theorem on growth bounds for Sobolev norms in the case of the  torus. 
\begin{proof}[Proof of Theorem~\ref{thm:2}] 
With the change of variable $u(x,t)\to u(x,t) e^{iPt}$ in the equation \eqref{sch}, where $P=\frac{1}{\pi}\|u_0\|_2^2$, it suffices to prove smoothing for the difference
$$
u(t)-e^{it(-\Delta)^\alpha}u_0.
$$
Note that $u$ satisfies \eqref{wickNLS}, and  hence \eqref{preduhamel}. 
Integrating \eqref{preduhamel} we obtain
\begin{multline}\label{nduhamel}
 u(t)-   e^{-it(-\Delta)^\alpha}u_0 = e^{-it(-\Delta)^\alpha} B(u_0)- B(u(t) ) \\
  -i\int_0^t e^{i(t^\prime-t)(-\Delta)^\alpha} \big[R(u(t^\prime)) + NR_1(u(t^\prime)) +NR_2(u(t^\prime)) +NR_3(u(t^\prime))\big] dt^\prime.
\end{multline}
Using Proposition~\ref{prop:mainvar} on the right hand side yields the claim. Continuity in time easily follows from the a priori estimates. 
The second part of the theorem follows from the first part and  the conservation law \eqref{conserve} at the  $H^\alpha$ level. 
\end{proof}
\begin{proof}[Proof of Theorem~\ref{thm:1}] 
This follows from Theorem~\ref{thm:2} for $s>\frac12$. 
For $s\leq\frac12$, we use $X^{s,b}$ spaces. Recall (see, e.g., Lemma~3.9 and Lemma~3.12 in \cite{etbook}) the 
continuous  embedding 
$$
X^{s,\frac12+}\subset C^0_tH^s_x,
$$ 
and 
$$\Big\|\int_0^t e^{i(-\Delta)^\alpha(t^\prime-t)}f(t^\prime)d t^\prime\Big\|_{X^{s,\frac12+}}\lesssim  \|f\|_{X^{s,-\frac12+}}.$$ 
Using Proposition~\ref{prop:main} and Proposition~\ref{prop:main2} 
on \eqref{nduhamel} (in the local existence interval) we  obtain
\begin{multline*} 
 \big\|u(t)-   e^{-it(-\Delta)^\alpha}u_0 \big\|_{H^{s+a}} \les  \|u_0\|_{H^s}^3+\|u(t)\|_{H^s}^3 +\int_0^t\big(\|u(t^\prime)\|_{H^s}^3+ \|u(t^\prime)\|_{H^s}^5\big)d t^\prime  \\
  +\Big\|\int_0^t e^{i(t^\prime-t)(-\Delta)^\alpha} \big(  NR_1(u(t^\prime)) +NR_2(u(t^\prime)) \big) dt^\prime\Big\|_{X^{s,\frac12+}}\\
  \les  \|u_0\|_{H^s}^3+\|u_0\|_{H^s}^5  
  +  \big\|  NR_1(u(t^\prime)) +NR_2(u(t^\prime))  \big \|_{X^{s,-\frac12+}}\\
  \les  \|u_0\|_{H^s}^3+\|u_0\|_{H^s}^5  + \|u\|_{X^{s,\frac12+}}^5   \les  \|u_0\|_{H^s}^3+\|u_0\|_{H^s}^5.  \qedhere
\end{multline*}
\end{proof}
 
 \begin{proof}[Proof of Theorem~\ref{thm:n4}] We prove the theorem only on $\T$. The proof on $\R$ is similar using Theorem~\ref{thm:n3} instead of Theorem~\ref{thm:2}.  
 
 First note that for $s\leq 3\alpha-1$, the claim follows from the second part of Theorem~\ref{thm:2}  with $s=\alpha$, and the boundedness of  the $H^\alpha$ norm.
  
Fix $s>3\alpha-1$. Using the second part of Theorem~\ref{thm:2} with  $a=2\alpha-1$ we have 
$$ \big\|u(t)-   e^{-it(-\Delta)^\alpha-itP}u_0 \big\|_{H^{s}}\les  
\|u_0\|_{H^{s-2\alpha+1}}   +\|u(t)\|_{H^{s-2\alpha+1}}   +\int_0^t \|u(t^\prime)\|_{H^{s-2\alpha+1}} dt^\prime,$$
which implies
$$ \big\|u(t) \big\|_{H^{s}}\les  
\|u_0\|_{H^s}   +\|u(t)\|_{H^{s-2\alpha+1}}    +\int_0^t \|u(t^\prime)\|_{H^{s-2\alpha+1}} dt^\prime.$$
By interpolation we have 
$$
\|u(t)\|_{H^{s-2\alpha+1}} \leq \|u(t)\|_{H^{\alpha}}^{\frac{2\alpha-1}{s-\alpha }}\|u(t)\|_{H^{s }}^{\frac{s-3\alpha+1}{s-\alpha }} \les  \|u(t)\|_{H^{s }}^{\frac{s-3\alpha+1}{s-\alpha }}.
$$
Therefore
 $$ \big\|u(t) \big\|_{H^{s}}\les  
\|u_0\|_{H^s}   +\|u(t)\|_{H^s}^{\frac{s-3\alpha+1}{s-\alpha }}    +\int_0^t \|u(t^\prime)\|_{H^s}^{\frac{s-3\alpha+1}{s-\alpha }} dt^\prime.$$
Letting $f(t):=\sup_{t^\prime\in [0,t]} \|u(t^\prime)\|_{H^s}$, we deduce 
$$
f(t)\les   \la t\ra f(t)^{\frac{s-3\alpha+1}{s-\alpha }},  
$$
which implies the claim.
\end{proof}

 \section{Fractional NLS on the Real Line} 
 We now apply a similar normal form transformation to the fractional NLS equation on the real line. The main difference with the torus case is that the phase rotation $P$ does not arise. The final equation reads as follows (compare with \eqref{preduhamel}) 
 \be\label{preduhamelR}
i\partial_t \left(e^{it(-\Delta)^\alpha} u + e^{it(-\Delta)^\alpha}  B(u) \right)=  e^{it(-\Delta)^\alpha} \big(R(u) + NR_1(u) +NR_2(u)  \big),
\ee
where
$$
\widehat{B(u)}(\xi)= \int\limits_{\xi-\xi_1+\xi_2-\xi_3=0 \atop{|\xi-\xi_1| |\xi-\xi_3|\ges (|\xi-\xi_1|+|\xi-\xi_3|+|\xi|)^{2-2\alpha}}} \frac{u_{\xi_1}\overline{u_{\xi_2}}u_{\xi_3}}{\xi^{2\alpha}-\xi_1^{2\alpha} +\xi_2^{2\alpha}- \xi_3^{2\alpha}},  
$$
$$
\widehat{R(u)}(\xi)=\int\limits_{\xi-\xi_1+\xi_2-\xi_3=0 \atop{|\xi-\xi_1| |\xi-\xi_3|\ll (|\xi-\xi_1|+|\xi-\xi_3|+|\xi|)^{2-2\alpha}}} u_{\xi_1}\overline{u_{\xi_2}}u_{\xi_3},
$$
$$
\widehat{NR_1(u)}(\xi)=2\int\limits_{\xi-\xi_1+\xi_2-\xi_3+\xi_4-\xi_5=0 \atop{|\xi-\xi_1| |\xi_2-\xi_1|\ges (|\xi-\xi_1|+|\xi_2-\xi_1|+|\xi|)^{2-2\alpha}}} \frac{u_{\xi_1}\overline{u_{\xi_2}}u_{\xi_3}\overline{u_{\xi_4}}u_{\xi_5}}{\xi^{2\alpha}-\xi_1^{2\alpha} +\xi_2^{2\alpha}-(\xi-\xi_1+\xi_2)^{2\alpha}}, 
$$
$$
\widehat{NR_2(u)}(\xi)=- \int\limits_{\xi-\xi_1+\xi_2-\xi_3+\xi_4-\xi_5=0 \atop{|\xi-\xi_1| |\xi-\xi_3|\ges (|\xi-\xi_1|+|\xi-\xi_3|+|\xi|)^{2-2\alpha}}} \frac{u_{\xi_1}\overline{u_{\xi_2}}u_{\xi_3}\overline{u_{\xi_4}}u_{\xi_5} }{\xi^{2\alpha}-\xi_1^{2\alpha} +(\xi-\xi_1-\xi_3)^{2\alpha}-\xi_3^{2\alpha}}. 
$$
\\
 The following proposition is analogous to Proposition ~\ref{prop:mainvar} for the torus case. Using this proposition one can prove Theorem~\ref{thm:n3} along the lines of the proof of Theorem~\ref{thm:2}.
\begin{prop}\label{prop:mainvarR} Fix $\frac12<\alpha\leq 1$, and $s_0>\frac12$. For any $s\geq s_0$, and $a < 2\alpha-1 $, we have  
$$
\|B(u)\|_{H^{s+a}}\les \|u\|_{H^s}\|u\|_{H^{s_0}}^2,
$$
$$
\|R(u)\|_{H^{s+a}}\les \|u\|_{H^s}\|u\|_{H^{s_0}}^2,
$$
$$
\|NR_j(u)\|_{H^{s+a}}\les \|u\|_{H^s}\|u\|_{H^{s_0}}^4,\,\,\,\,j=1,2.
$$
\end{prop}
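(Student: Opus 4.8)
The plan is to mirror the proof of Proposition~\ref{prop:mainvar}, replacing the lattice sums by integrals over $\R$ and invoking the integral forms of Lemma~\ref{lem:sums} and Lemma~\ref{freq_est}, both of which hold for real frequencies. First I would reduce all four estimates to the base case $s=s_0$: since the resonance relation $\xi-\xi_1+\xi_2-\xi_3=0$ (and its $5$-linear analogue) forces $\max(|\xi_1|,|\xi_2|,|\xi_3|)\gtrsim|\xi|$, the weight $\langle\xi\rangle^{s-s_0}$ can be transferred onto the largest input frequency at the cost of a factor $\|u\|_{H^s}\|u\|_{H^{s_0}}^{-1}$, exactly as on the torus. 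After the Cauchy--Schwarz inequality in the convolution (and, for $NR_1,NR_2$, after integrating out the two extra frequencies using $s_0>\frac12$ and Lemma~\ref{lem:sums}), each bound reduces to estimating, uniformly in $\xi$, the continuous analogue of \eqref{Bestimate}.

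With the substitution $\xi_1=\xi+m$, $\xi_2=\xi+m+n$, $\xi_3=\xi+n$ and the phase bound of Lemma~\ref{freq_est} this multiplier becomes
\be\nn
\sup_\xi\int_{|mn|\ges(|m|+|n|+|\xi|)^{2-2\alpha}}\frac{\la\xi\ra^{2s_0+2a}(|m|+|n|+|\xi|)^{4-4\alpha}\la\xi+m\ra^{-2s_0}\la\xi+m+n\ra^{-2s_0}\la\xi+n\ra^{-2s_0}}{m^2n^2}\,dm\,dn,
\ee
which, taking $|m|\geq|n|$ by symmetry, I would split into $|m|\gg|\xi|$, $|m|\approx|\xi|$, and $|m|\ll|\xi|$ and integrate one variable at a time with the integral form of Lemma~\ref{lem:sums}. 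The regions $|m|\gg|\xi|$ and $|m|\ll|\xi|$, as well as the phase-free term $R(u)$, transcribe directly from Cases~1 and~3 of Proposition~\ref{prop:main}, with $s_0>\frac12$ playing the role of the integer lower bound $|n|\geq1$ used on the torus.

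The hard part, and the only genuinely new one, is the region $|m|\approx|\xi|$ with $|n|$ small, i.e.\ the configuration with one input frequency of size $|\xi|$ and the other two near the origin. On the torus the dangerous sub-region $0<|n|<1$ is empty, and $|n|\geq1$ makes the $n$-sum in Case~2 converge and yields precisely $a\leq2\alpha-1$; on $\R$ this sub-region is present, the non-resonance cutoff only gives $|n|\gtrsim|\xi|^{1-2\alpha}\to0$, and bounding by absolute values produces $\int dn/n^2$ cut off at $|\xi|^{1-2\alpha}$, hence a spurious factor $|\xi|^{2\alpha-1}$ and only the weak gain $a\leq\alpha-\frac12$---half a derivative when $\alpha=1$, which is the strength of the earlier theorems. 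This is the ``small frequency interaction'' of resonances flagged in the introduction.

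To recover the full gain I would not take absolute values in this region but keep the singular structure: since $g\approx|\xi|^{2\alpha-1}|n|$, the factor $|\xi|^{-(2\alpha-1)}$ supplies the derivative gain of order $2\alpha-1$, while the remaining $|n|^{-1}$ acts as a Hilbert transform in the large frequency and is $L^2$-bounded, so the spurious $|\xi|^{2\alpha-1}$ of the absolute-value estimate disappears and one is left with $a\leq2\alpha-1$; for $R(u)$, which carries no phase, the corresponding piece is instead controlled through its Wiener--Khinchin autocorrelation structure in $L^2$. In both cases the two low-frequency factors are absorbed using $s_0>\frac12$, and the residual endpoint logarithm is what forces the inequality to be strict, $a<2\alpha-1$. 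I expect this near-resonant estimate to be the crux; the remaining regions and the reductions for $NR_1,NR_2$ follow the torus argument verbatim.
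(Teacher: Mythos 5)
Your skeleton matches the paper's: reduce to $s=s_0$ by throwing $\langle\xi\rangle^{s-s_0}$ onto the largest input frequency, observe that the region where both relevant frequency differences are $\geq 1$ transcribes verbatim from Proposition~\ref{prop:main}, and isolate as the only new difficulty the region where one difference lies below $1$ (empty on $\Z$). You have also correctly read off that the non-resonance cutoff only gives $|\rho|\gtrsim|\xi|^{1-2\alpha}$ there. However, your treatment of that region contains a genuine gap, and it also misdiagnoses where the loss comes from. The spurious factor $|\xi|^{2\alpha-1}$ and the weak gain $a\le\alpha-\tfrac12$ that you compute are artifacts of the Cauchy--Schwarz reduction to a squared multiplier, which turns the kernel $|\rho|^{-1}$ into $|\rho|^{-2}$; they are not intrinsic to taking absolute values. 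The paper's fix is precisely to \emph{not} perform Cauchy--Schwarz in the small variable: writing $f(\xi)=|u_\xi|\langle\xi\rangle^s$ and $\rho=\xi-\xi_1$, it keeps the trilinear integral, bounds the kernel by $|\xi|^{a+1-2\alpha}/|\rho|$ (absolute values throughout), uses the constraint $|\rho|\gtrsim|\xi|^{1-2\alpha}$ to trade $|\rho|^{-\epsilon}$ for $|\xi|^{(2\alpha-1)\epsilon}$ so that the total power of $|\xi|$ is negative (this is where $a<2\alpha-1$ strictly is used), and then integrates $|\rho|^{-1+\epsilon}$ over $|\rho|\le1$ after Minkowski and Cauchy--Schwarz in $\xi_3$ only. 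No cancellation is needed.

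By contrast, your proposed mechanism --- keeping the sign of the denominator and treating $|n|^{-1}$ as a Hilbert transform that ``is $L^2$-bounded'' --- is not justified as stated. For $\tfrac12<\alpha<1$ the denominator $\xi^{2\alpha}-\xi_1^{2\alpha}+\xi_2^{2\alpha}-\xi_3^{2\alpha}$ is only \emph{comparable} to $|\xi|^{2\alpha-1}n$, not equal to an odd kernel in $n$, so the principal-value cancellation you are invoking does not come for free; and even for $\alpha=1$ the object being transformed is the product $\overline{u_{\xi+m+n}}u_{\xi+n}$, which in the relevant weighted variables is a product of two $L^2$ functions, hence only $L^1$, where the Hilbert transform is not bounded. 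Similarly, ``Wiener--Khinchin autocorrelation structure'' is not an argument for $R(u)$; the paper handles $R$ by noting that the resonant cutoff forces $|\rho|\ll|\xi|^{1-2\alpha}$, so $|\xi|^{a}\les|\rho|^{-a/(2\alpha-1)}$ with exponent $<1$, an integrable singularity. Finally, your reduction of $NR_1,NR_2$ to ``the continuous analogue of \eqref{Bestimate}'' re-enters the lossy squared-multiplier framework in the small-frequency region; the paper instead reduces $NR_2$ to $B$ via the algebra property of $H^s$, $s>\tfrac12$, applied to the two extra factors, and $NR_1$ by integrating out $\xi_4,\xi_5$ first. The correct argument is thus simpler and more robust than the cancellation-based one you sketch, and the step you identify as the crux is left unproved in your proposal.
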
   
\begin{proof}
We will prove the  proposition for $s=s_0$ only. The statement for $s\geq s_0$ can be obtained from this as in the proof of Proposition~\ref{prop:mainvar}. 
When $s=s_0$, the proof is identical to the proof of Proposition~\ref{prop:main} in the case when $|\xi-\xi_1|, |\xi-\xi_3| \geq 1$ for $B(u)$, $R(u)$ and $NR_2(u)$, and in the case $|\xi-\xi_1|, |\xi_2-\xi_1| \geq 1$ for $NR_1(u)$.

We start with $R(u)$. Denoting $|u_\xi | \la \xi\ra^s$ by $f(\xi)$, and using symmetry, it suffices to prove that
$$
\Big\|\int\limits_{|\xi-\xi_1|\leq 1,\,\, |\xi-\xi_1|\leq |\xi-\xi_3| \atop{|\xi-\xi_1| |\xi-\xi_3|\ll (|\xi-\xi_1|+|\xi-\xi_3|+|\xi|)^{2-2\alpha}}}  \frac{ \la \xi\ra^{s+a}f(\xi_1) f(\xi_1+\xi_3-\xi)f(\xi_3)}{\la \xi_1\ra^s \la \xi_3\ra^s \la \xi_1+\xi_3-\xi\ra^s} d\xi_1d\xi_3\Big\|_{L^2_\xi}\les \|f\|_{L^2}^3.
$$
By Cauchy Schwarz inequality in $\xi_1$ and $\xi_3$ it is enough to consider the case $|\xi|\gg 1$. 
Note that after the change of variable $\xi-\xi_1=\rho$, the left hand side can be bounded by 
$$
\Big\|\int\limits_{|\rho|\leq 1,\,\, |\rho|\leq |\xi-\xi_3| \atop{|\rho| |\xi-\xi_3|\ll ( |\xi-\xi_3|+ | \xi|)^{2-2\alpha}}}  \frac{ |\xi|^{ a}f(\xi-\rho) f(\xi_3-\rho)f(\xi_3)}{  \la \xi_3\ra^{2s}  } d\rho d\xi_3\Big\|_{L^2_{|\xi|\gg 1}}.
$$
We consider two cases.
\\
Case 1. $|\xi_3|\ll |\xi|$. In this region we bound the above integral by
\begin{multline*}
\Big\|\int\limits_{ |\rho|  \ll  |\xi|^{1-2\alpha }}  \frac{ |\xi|^{ a}f(\xi-\rho) f(\xi_3-\rho)f(\xi_3)}{  \la \xi_3\ra^{2s}  } d\rho d\xi_3\Big\|_{L^2_{|\xi|\gg 1}} \\
 \les \Big\|\int\limits_{ |\rho|  \leq 1}  \frac{ f(\xi-\rho) f(\xi_3-\rho)f(\xi_3)}{  |\rho|^{\frac{a}{2\alpha-1}}  } d\rho d\xi_3\Big\|_{L^2_{|\xi|\gg 1}} 
\les \|f\|_{L^2}^3 \int\limits_{ |\rho|  \leq 1}  \frac{ 1}{  |\rho|^{\frac{a}{2\alpha-1}}  } d\rho \les \|f\|_{L^2}^3,
\end{multline*}
provided that $a<2\alpha-1$. In the second inequality we first took the $L^2$ norm inside and then integrated in $\xi_3$. 

\noindent
Case 2. $|\xi_3|\gtrsim |\xi|$. For $a\leq 2s$ we bound the integral by 
\be\label{RCASE2}
\Big\|\int\limits_{ |\rho|  \leq 1}    f(\xi-\rho) f(\xi_3-\rho)f(\xi_3) d\rho d\xi_3\Big\|_{L^2_{|\xi|\gg 1}}   \les \|f\|_{L^2}^3
\ee
where we took the $L^2$ norm inside and integrated as before.

We now consider $B(u)$. Similarly, it suffices to prove
$$
\Big\| \int\limits_{ |\xi-\xi_1|\leq 1,\,\, |\xi-\xi_1|\leq |\xi-\xi_3|  \atop{|\xi-\xi_1| |\xi-\xi_3|\ges ( |\xi-\xi_3|+|\xi|)^{2-2\alpha}}}  \frac{ |\xi|^{ a}  \la \xi_3\ra^{-2s}  f(\xi_1) f(\xi_1+\xi_3-\xi)f(\xi_3)}{| \xi^{2\alpha}-\xi_1^{2\alpha} +(\xi_1+\xi_3-\xi)^{2\alpha}- \xi_3^{2\alpha} | } d\xi_1d\xi_3 \Big\|_{L^2_{|\xi|\gg 1}}   \les \|f\|_{L^2}^3. 
$$
Using Lemma~\ref{freq_est} and letting $\rho=\xi-\xi_1$, we bound the left hand side  by 
$$
\Big\| \int\limits_{ |\rho|\leq 1,\,\, |\rho|\leq |\xi-\xi_3|  \atop{|\rho| |\xi-\xi_3|\ges ( |\xi-\xi_3|+|\xi|)^{2-2\alpha}}}  \frac{ |\xi|^{ a}    ( |\xi-\xi_3|+|\xi|)^{2-2\alpha}  f(\xi-\rho) f(\xi_3-\rho)f(\xi_3)}{\la \xi_3\ra^{ 2s} |\rho| |\xi-\xi_3|  } d\rho d\xi_3 \Big\|_{L^2_{|\xi|\gg 1}}.
$$
We again consider two cases.

\noindent
Case 1. $|\xi_3|\ll |\xi|$. In this region we bound the integral above by
\begin{multline*}
\Big\| \int\limits_{   1\geq   |\rho|  \ges |\xi|^{1-2\alpha }}  \frac{ |\xi|^{ a+1-2\alpha}  f(\xi-\rho) f(\xi_3-\rho)f(\xi_3)}{  |\rho|  } d\rho d\xi_3 \Big\|_{L^2_{|\xi|\gg 1}}
\\ \les \Big\| \int\limits_{   1\geq   |\rho| }  \frac{ |\xi|^{ a+1-2\alpha+}  f(\xi-\rho) f(\xi_3-\rho)f(\xi_3)}{  |\rho|^{1-}  } d\rho d\xi_3 \Big\|_{L^2_{|\xi|\gg 1}} \les \|f\|_{L^2}^3,
\end{multline*}
provided that $a<2\alpha-1$. In this case the last inequality follows by integrating in $\xi, \xi_3, \rho$, in that order.

\noindent
Case 2. $|\xi_3|\gtrsim |\xi|$. Using $|\rho| |\xi-\xi_3|\ges ( |\xi-\xi_3|+|\xi|)^{2-2\alpha}$, we bound the integral by \eqref{RCASE2} for $a\leq 2s$.

For $NR_2(u)$, note that by the algebra property of Sobolev spaces
$$
\widetilde f(\xi-\xi_1-\xi_3)= \Big|\int \la \xi-\xi_1-\xi_3\ra^s \overline{u_{\xi_2}} \overline{u_{\xi_4}} u_{\xi-\xi_1+\xi_2-\xi_3+\xi_4} d\xi_2 d\xi_4 \Big| \in L^2_{\xi-\xi_1-\xi_3}.
$$
This reduces the proof for $NR_2(u)$ to the case of $B(u)$. 

Finally for $NR_1(u)$, the analysis is similar by  eliminating $\xi_5$ and then integrating in $\xi_3$ and $\xi_4$. Then, the change of  variable $\xi_3=\xi-\xi_1+\xi_2$ reduces its proof to the case of $B(u)$. 
\end{proof}

   \section*{Appendix. Quintic NLS on the Real Line}
We consider the quintic NLS equation on the real line:  
 \begin{equation}\label{qsch}
\left\{
\begin{array}{l}
iu_{t}+\Delta u =\pm |u|^4u, \,\,\,\,  x \in {\mathbb{R}}, \,\,\,\,  t\in \mathbb{R} ,\\
u(x,0)=u_0(x)\in H^{s}(\mathbb{R}). \\
\end{array}
\right.
\end{equation}
We prove the following local estimate: For any $s>\frac13$ and  $a< \min(3s-1,\frac12)$, we have 
\be\label{quin_bound} \big\||u|^4u\big\|_{X^{s+a,-\frac12+}}\les \|u\|^5_{X^{s,\frac12+}}.
\ee
This leads to a smoothing estimate as in Theorem~\ref{thm:1}. 
The equation is $L^2-$critical and one can control the $H^1$ norm  in the defocusing case. Dodson in \cite{bd} obtained global solutions for $s>\frac14$  and provided polynomial-in-time bounds for any $\frac14<s\leq 1$. Therefore, in the defocusing case our smoothing theorem can be extended to all times.   

   As in Section 3, to obtain \eqref{quin_bound} the bound, it suffices to bound the multiplier 
   $$
  \sup_{\xi} \int\limits_{\xi-\xi_1+\xi_2-\xi_3+\xi_4-\xi_5=0} \frac{\la\xi\ra^{2s+2a} \prod_{i=1}^5 \la \xi_i\ra^{-2s} }{\la \xi^2-\xi_1^2+\xi_2^2-\xi_3^2+\xi_4^2-\xi_5^2\ra^{1-}}.
   $$
 Without loss of generality $|\xi_5| \gtrsim |\xi|$; the proof is similar in the other cases.   In this case, by substituting  $\xi_5=\xi-\xi_1+\xi_2-\xi_3+\xi_4$, we bound the integral by 
  $$
   \int \frac{\la\xi\ra^{ 2a} \prod_{i=1}^4 \la \xi_i\ra^{-2s} }{\la 2(\xi-\xi_1+\xi_2-\xi_3)\xi_4 +p(\xi,\xi_1,\xi_2,\xi_3)  \ra^{1-}}   d\xi_1 d\xi_2 d\xi_3 d\xi_4,
  $$ 
	where $p$ is a  polynomial of degree $2$. 
Note that (for $s>0$)
$$
\int \frac{\la x\ra^{-2s}}{\la Ax+B\ra^{1-}} dx \les \frac1{|A|^{1-}} \int \frac{\la x\ra^{-2s}}{ |x+B/A|^{1-} } dx \les \frac1{|A|^{1-}},
$$	
uniformly in $B$. 
Using this in the $\xi_4$ integral,  we bound the integral above by 
$$
   \int \frac{\la\xi\ra^{ 2a} \prod_{i=1}^3 \la \xi_i\ra^{-2s} }{|\xi-\xi_1+\xi_2-\xi_3|^{1-}}   d\xi_1 d\xi_2 d\xi_3 \les  \int \frac{\la\xi\ra^{ 2a} \prod_{i=1}^2 \la \xi_i\ra^{-2s} }{\la \xi-\xi_1+\xi_2\ra^{\min(2s,1)-}}   d\xi_1 d\xi_2 \les \frac{ \la \xi\ra^{2a}}{\la \xi\ra^{\min(6s-2,1)-}}.
 $$ 
The last term is bounded for any  $a<\min(3s-1,\frac12)$.

 \end{document}